\documentclass[11pt,reqno,a4paper]{amsart}
\usepackage[latin1]{inputenc}
\usepackage[latin1]{inputenc}
\usepackage{a4wide}
\usepackage{fancyhdr}
\usepackage{amsfonts,amssymb,amsmath,amsthm,latexsym,indentfirst}
\usepackage{epsfig}
\usepackage{enumerate}
\usepackage{mathtools}
\usepackage{amsfonts}
\usepackage{latexsym}
\usepackage{amssymb}
\usepackage{csquotes} 
\usepackage{amsmath}
\usepackage{textcomp}
\usepackage{xcolor}
\usepackage[pagewise]{lineno}
\usepackage{graphicx}
\usepackage{color}
\usepackage{hyperref}
\hypersetup{colorlinks=true,citecolor=blue,urlcolor=blue}

\numberwithin{equation}{section}
\newtheorem{Theorem}{Theorem}

\newtheorem{theorem}{Theorem}[section]

\newtheorem{lemma}[theorem]{Lemma}

\newcommand{\N}[1][]{\ensuremath{{\mathbb{N}^{#1}} }}

\newcommand{\R}[1][]{\ensuremath{{\mathbb{R}^{#1}} }}

\newcommand{\dis}{\displaystyle}

\newcommand{\mc}{\mathcal}

\newcommand{\p}{\partial}
\DeclareMathOperator\supp{supp}

\title[On the stabilization of $L^2$ and $H^1$ norms for the Z-K eq. with damping]{On the stabilization of $L^2$ and $H^1$ norms for the Zakharov-Kuznetsov equation with damping}

\author[Mykael Cardoso, Roger P. de Moura and Gleison N. Santos]{Mykael A. Cardoso, Roger P. de Moura and Gleison N. Santos}

\address{Universidade Federal do Piau\'i, Campus Universit\'ario Ministro Petrônio Portella, Ininga, 64049-550, Teresina, Piau\'i
, Brazil}
\email{mykael@ufpi.edu.br}

\address{Universidade Federal do Piau\'i, Campus Universit\'ario Ministro Petrônio Portella, Ininga, 64049-550, Teresina, Piau\'i
, Brazil}
\email{mourapr@ufpi.edu.br}

\address{Universidade Federal do Piau\'i, Campus Universit\'ario Ministro Petrônio Portella, Ininga, 64049-550, Teresina, Piau\'i
, Brazil}
\email{gleison@ufpi.edu.br}

\keywords{$n$ Dimensional ZK equation, Decay of solutions, Stabilization with damping}

\subjclass[2000]{35D05, 35E15, 35Q35}

\begin{document}

\begin{abstract}
In this paper we establish exponential decay results for solutions of the damped $n$-dimensional Zakharov--Kuznetsov equation for $2 \le n \le 3$. 
More precisely, we prove the exponential decay of the $L^2(\mathbb{R}^n)$ norm when the damping is localized. 
In addition, when the dissipative mechanism acts on the whole space $\mathbb{R}^n$, we prove the exponential decay of the $H^1(\mathbb{R}^n)$ norm. Our strategy of proof combines a Kato's type smoothing effect, unique continuation and an observability inequality. 
\end{abstract}

\maketitle

\section{Introduction}

The main purpose of this work is to study the exponential decay of the energy for the initial value problem associated with the $n$-dimensional Zakharov--Kuznetsov equation ($n=2,3$)
\begin{equation}\label{eqZK1}
	\partial_t u + \partial_{x_1}\Delta u + u\partial_{x_1}u = 0,
\end{equation}
where $u=u(x,t)$, with $(x,t)\in\mathbb{R}^n\times\mathbb{R}_+$, is a real-valued function.

The Zakharov--Kuznetsov equation \eqref{eqZK1} is a higher-dimensional extension of the Korteweg--de Vries equation describing the propagation of long surface waves. It was formally derived by Zakharov and Kuznetsov \cite{ZK} as a long-wave, small-amplitude limit of the Euler--Poisson system under the ``cold plasma'' approximation in plasma physics. A rigorous justification of this long-wave limit can be found in \cite{LLS}.

Regarding well-posedness results for the initial value problem (IVP) associated with \eqref{eqZK1} in dimension $n=2$, Faminskii \cite{Fa} proved local well-posedness for initial data in $H^m(\R^2)$, $m\ge1$, and global well-posedness in $H^1(\R^2)$. Later, Linares and Pastor \cite{LPas} improved this result by establishing local well-posedness for initial data in $H^s(\R^2)$ with $s>\frac34$. Subsequently, Grünrock and Herr \cite{GH}, and independently Molinet and Pilod \cite{MoPi}, proved local well-posedness for $s>\frac12$. The best result to date is due to Kinoshita \cite{Kinosh}. 
For dimensions $n\geq 3$, we refer to the recent work of Herr and Kinoshita \cite{HK}, where the authors proved local well-posedness for the Cauchy problem associated with \eqref{eqZK1} in $H^s(\R^n)$ for $s>\frac{n-4}{2}$. As a consequence, they also obtained global well-posedness in $L^2(\R^3)$ and in $H^1(\R^4)$, the latter under a smallness condition on the initial data. In particular, their local results imply global well-posedness in $H^1(\mathbb{R}^3)$. The existence of smooth solutions can be obtained using the classical parabolic regularization method, as in Iório and Nunes \cite{IN}.

The following quantities are conserved along the flow
\begin{equation}\label{energy}
	E(u(t))\coloneqq\frac{1}{2}\int_{\mathbb R^n}u^2(x,t)\,dx,
\end{equation}
and
\begin{equation}\label{hamiltonian}
	H(u(t)) \coloneqq \int_{\mathbb{R}^n} \left(|\nabla u (x, t)|^2 - \frac{1}{3}u^3(x, t)\right) dx.
\end{equation}

Although the quantities \eqref{energy} and \eqref{hamiltonian} are conserved, their decay can be enforced by adding a damping term to equation \eqref{eqZK1}. We consider the following initial value problem (IVP):
\begin{equation} \label{eqZK1dampped}
	\left\{
	\begin{array}{lc}
		\partial_t u + \partial_{x_1}\Delta u + u\partial_{x_1}u + a(x)u = 0, & x\in\R^n,\ t\in\mathbb R, \\
		u(x,0)=u_0(x),
	\end{array}
	\right.
\end{equation}
where $a(x)$ is a bounded nonnegative function, which we refer to as a \textit{damping} term. 

It is immediate to observe that if $a(x)$ satisfies
\[
a(x) \geq \alpha_0, \qquad \forall x \in \mathbb{R}^n,
\]
and if $u$ is a smooth solution of \eqref{eqZK1dampped}, then
\begin{equation}\label{energydampped}
	E(u(t)) \leq e^{-2 \alpha_0 t}E(u_0).
\end{equation}

A natural question is whether the same exponential decay holds when $a(x)$ acts only on a localized region $\Omega \subset \mathbb{R}^n$, that is,
\begin{equation}
	a(x) \geq \alpha_0, \qquad \forall x \in\Omega.
\end{equation}

Similar problems have been studied for one-dimensional dispersive equations posed on bounded and unbounded domains (see, e.g., \cite{CavFamNat, DNa, RZ, LPa} and the references therein). For models posed on multidimensional bounded domains, we refer to \cite{CaFNa, MNS, GP, Na}. However, much less is known for dispersive equations posed on bounded domains that are subsets of $\R^n$, and even less when the domain is the whole space $\R^n$, $n \geq 2$, as in the case of the Zakharov--Kuznetsov equation \eqref{eqZK1}. To the best of our knowledge, the only work addressing this problem is that of Natali \cite{Na}. In this direction, we establish the following result.

\begin{theorem}\label{teo1}
	Let $a \in W^{1,\infty}(\mathbb{R}^n)$ be a non-negative function satisfying
	\begin{equation}\label{damping cond}
		a(x) \geq \alpha_0, \qquad \forall \,x \in \mathbb{R}^n \mbox{ with } \ |x_1| > R,
	\end{equation}
	for some $R, \alpha_0 > 0$.  
	Then, given $L > 0$, there exist positive constants $\delta = \delta(L)$ and $C = C(L)$ such that
	\begin{equation}\label{energydamppedtheorem1}
		E(u(t)) \leq C e^{-\delta t} E(u_0),
	\end{equation}
	for any mild solution $u \in X_T^1$ of \eqref{eqZK1dampped} with $u_0 \in H^1(\mathbb{R}^n)$ satisfying $\|u_0\|_{L^2(\mathbb{R}^n)} \leq L$.
\end{theorem}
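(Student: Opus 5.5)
The plan is the classical compactness–uniqueness strategy (Rosier–Zhang / Linares–Pazoto type), adapted to $\mathbb{R}^n$ via a Kato smoothing estimate in the $x_1$ direction. Multiplying \eqref{eqZK1dampped} by $u$ and integrating gives the energy identity
\begin{equation*}
E(u(t)) + \int_0^t\!\int_{\mathbb{R}^n} a(x)u^2\,dx\,ds = E(u_0).
\end{equation*}
By the semigroup property and the nonincrease of $E$, it suffices to prove an \emph{observability inequality}: there exist $T>0$ and $C=C(L,T)$ such that
\begin{equation*}
E(u_0)\le C\int_0^T\!\int_{\mathbb{R}^n} a u^2\,dx\,dt
\end{equation*}
for all solutions with $\|u_0\|_{L^2}\le L$. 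Indeed, this yields $E(u(T))\le (1-C^{-1})E(u_0)$. Iterating on $[kT,(k+1)T]$, which is legitimate since $\|u(kT)\|_{L^2}\le L$, gives \eqref{energydamppedtheorem1} with $\delta = -T^{-1}\log(1-C^{-1})$.

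The first ingredient is a Kato smoothing estimate. I multiply the equation by $\varphi(x_1)u$, with $\varphi$ smooth, nondecreasing and bounded, $\varphi'>0$ on $[-R-1,R+1]$, and $\varphi$ constant for $|x_1|$ large. After integration by parts, the dispersive term produces $\int\varphi'(3(\partial_{x_1}u)^2+\sum_{j\ge2}(\partial_{x_j}u)^2)$ up to a $\varphi'''u^2$ term, and the nonlinear term gives $\frac13\int\varphi' u^3$. The cubic term on the strip is bounded by Gagliardo–Nirenberg, using $n\le3$ and a localized $H^1$ norm, absorbing it with the $L^2$ bound $L$; this is where $C$ depends on $L$. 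The result is
\begin{equation*}
\int_0^T\!\int_{|x_1|<R+1}|\nabla u|^2 \le C(L,T)\Big(E(u_0)+\int_0^T\!\int a u^2\Big).
\end{equation*}
Since $a\ge\alpha_0$ for $|x_1|>R$, the mass outside the strip is already controlled by the damping term. It therefore suffices to bound $\int_0^T\!\int_{|x_1|<R}u^2$, and with the smoothing estimate we get local $L^2_tH^1_x$ control.

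The second ingredient is an argument by contradiction. If observability fails, there are solutions $u^k$ with $\|u_0^k\|_{L^2}\le L$ and
\begin{equation*}
\int_0^T\!\int a (u^k)^2 \Big/ E(u^k_0)\to0.
\end{equation*}
Set $\lambda_k=\|u_0^k\|_{L^2}\to\lambda\in[0,L]$ and $v^k=u^k/\lambda_k$, which solve $\partial_t v^k+\partial_{x_1}\Delta v^k+\lambda_k v^k\partial_{x_1}v^k+av^k=0$. These satisfy $\|v^k_0\|=1$ and $\int_0^T\!\int a(v^k)^2\to0$. By the smoothing estimate, $v^k$ is bounded in $L^2(0,T;H^1_{loc})$, and $\partial_t v^k$ is bounded in $L^1(0,T;H^{-2}_{loc})$. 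Aubin–Lions then gives strong convergence in $L^2(0,T;L^2_{loc})$. Because the mass outside the strip $|x_1|<R$ is killed by $a$, the convergence upgrades to strong convergence in $L^2((0,T)\times\mathbb{R}^n)$ after also cutting off in $x'=(x_2,\dots,x_n)$. The lack of compactness in the unbounded directions $x'$ is handled by noting that $\int_0^T\|v^k\|^2\ge cT$ from the energy identity. The limit $v$ satisfies $\int_0^T\|v(t)\|^2dt\ge cT>0$, together with $av\equiv0$ and $\partial_t v+\partial_{x_1}\Delta v+\lambda v\partial_{x_1}v=0$. In particular $v\equiv0$ on $\{|x_1|>R\}\times(0,T)$.

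The final step, which I expect to be the main obstacle, is \textbf{unique continuation}: a (weak, possibly only $L^2$-type) solution of ZK vanishing on the half-spaces $|x_1|>R$ for $t\in(0,T)$ must vanish identically. I would first regularize, using smoothing in time differences or the Kato effect to get $v\in L^2(0,T;H^1)$ and bootstrap to smoothness. I would then invoke a Carleman / Saut–Scheurer type unique continuation result for ZK across the hyperplane $x_1=R$; the hyperplane is noncharacteristic, since the principal symbol $\xi_1|\xi|^2$ does not vanish for $\xi=e_1$. Alternatively, for $\lambda=0$, a Fourier argument works directly: compact support in $x_1$ together with analyticity gives $v\equiv0$. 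This contradicts $\int_0^T\|v\|^2>0$, which proves observability and hence the theorem. The delicate points are the $x'$ noncompactness in the strong-convergence step and the regularity needed to apply the unique continuation result.
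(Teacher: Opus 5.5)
Your scheme is the paper's compactness--uniqueness argument. The paper normalizes by $\|u_k\|_{L^2(0,T;L^2(Q_R))}$ and states observability as $\int_0^T\!\int_{Q_R}u^2\le c\int_0^T\!\int a u^2$, which is equivalent to your version via the energy identity. There is, however, a genuine gap exactly where you say the lack of compactness in $x'=(x_2,\dots,x_n)$ is handled by $\int_0^T\|v^k(t)\|^2\,dt\ge cT$. That bound holds for each $v^k$ but does not pass to the limit. Aubin--Lions gives strong convergence only on bounded sets, and the damping only yields $\int_0^T\!\int_{\{|x_1|>R\}}(v^k)^2\to0$. So all the mass of $v^k$ ends up in the strip $Q_R$, which is unbounded in $x'$ and where $a$ may vanish, and nothing you have prevents it from escaping to $|x'|\to\infty$. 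In that case $v^k\to0$ in $L^2_{\mathrm{loc}}$, $v\equiv0$, and no contradiction arises. This is structural, not technical. If, say, $a=a(x_1)$, every property of $v^k$ you use is preserved when each $u^k$ is translated far enough in $x'$: the equation, $\|v^k_0\|_{L^2}=1$, $\int_0^T\!\int a(v^k)^2\to0$, and the smoothing bounds on $Q_r=[-r,r]\times\mathbb{R}^{n-1}$. The conclusion $v\neq0$ is not preserved. Hence neither the strong convergence in $L^2((0,T)\times\mathbb{R}^n)$ nor $\int_0^T\|v\|^2\,dt\ge cT$ follows.

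Closing this requires an additional idea, for instance a concentration--compactness step in $x'$. If $\sup_{y\in\mathbb{R}^{n-1}}\int_0^T\!\int_{\{|x_1|<R,\ |x'-y|<1\}}(v^k)^2$ stays bounded below, recentre by translations $y_k$ in $x'$. The translated dampings $a(\cdot+y_k)$ are still $\ge\alpha_0$ on $\{|x_1|>R\}$ and still drop out in the limit, so you get a nonzero limit vanishing for $|x_1|>R$. The alternative, where this supremum tends to $0$, must be ruled out by a separate argument. Be aware that the paper does not supply this either. It applies Aubin--Lions via a compact embedding $H^1(Q_r)\hookrightarrow L^2(Q_r)$ on the unbounded strip, which fails for the same translation reason. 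It then calls the limit compactly supported in order to use Theorem~\ref{Tucp}, which needs support in a cube. In that respect your unique continuation across the hyperplane $\{x_1=R\}$ fits the actual geometry better than the paper's. It must still be backed by a precise Carleman or Saut--Scheurer type theorem for ZK with the smooth, non-analytic coefficient $\lambda v$. Noncharacteristicity alone gives uniqueness (Holmgren) only in the analytic case $\lambda=0$, which your Fourier argument already covers.
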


As in several other works on exponential decay of the energy (cf. \cite{CavFamNat, DNa, RZ, LPa, CaFNa, MNS, Na}), an observability inequality plays a crucial role in proving Theorem \ref{teo1}. In order to obtain it, we will use the following unique continuation result proved in \cite{DMS}.

\begin{Theorem}\label{Tucp}
	Let $u=u(x,t)$ be a smooth solution of the IVP associated with \eqref{eqZK1}, defined on a nondegenerate interval $I=[-T,T]$. Suppose that for some $B>0$,
	\[
	\supp u(t)\subseteq \mc B, \qquad \text{for all } t\in I,
	\]
	where $\mc B:=\underbrace{[-B,B]\times\dots\times[-B,B]}_{n\ \text{times}}$. Then $u\equiv 0$.
\end{Theorem}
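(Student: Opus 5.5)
Theorem \ref{Tucp} is a unique continuation result for compactly supported solutions, cited from \cite{DMS}. The natural route is the Carleman-estimate approach of Escauriaza--Kenig--Ponce--Vega type, adapted to the operator $\partial_t+\partial_{x_1}\Delta$, in the spirit of the Bourgain/Kenig--Ponce--Vega arguments for KdV. First, compact support in $x$ for every $t\in I$ gives very strong spatial decay. In particular $e^{\lambda x_1}u(t)\in H^k(\mathbb{R}^n)$ for all $\lambda\in\mathbb{R}$ and all $k$, uniformly in $t\in I$. The strategy is to show that this decay, in both directions of $x_1$, is incompatible with a nonzero solution.

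The key steps are as follows.

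(i) Treat the nonlinearity as a perturbation. Write $\partial_t u+\partial_{x_1}\Delta u=-u\partial_{x_1}u=: V u$ with $V=-\partial_{x_1}u$. Then $V\in L^\infty$, it is compactly supported, and it is smooth on $\mathcal{B}\times I$. So $u$ solves a linear equation with a bounded, compactly supported potential.

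(ii) Prove a Carleman estimate for the weight $e^{\lambda x_1}$. Specifically,
\[
\|e^{\lambda x_1} f\|_{L^p_xL^q_t}\le C\,\|e^{\lambda x_1}(\partial_t+\partial_{x_1}\Delta)f\|_{L^{p'}_xL^{q'}_t},
\]
with $C$ independent of $\lambda$, for suitable mixed norms and compactly supported $f$. I would prove it via the conjugated symbol $i\tau - i(\xi_1-i\lambda)(|\xi|^2-\ldots)$. Changing variables $\xi_1\mapsto\xi_1+i\lambda$ reduces it to a uniform Strichartz/resolvent-type bound, for which Kenig--Ruiz--Sogge type arguments are the guide.

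(iii) Localize in time. I would apply the estimate to $\theta(t)u$ with $\theta$ a cutoff equal to $1$ on $[-T/2,T/2]$. The commutator $\theta'(t)u$ is supported where $|t|\in[T/2,T]$.

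(iv) Absorb the potential. Use smallness of $\|V\|$ on short time slabs, splitting $I$ into finitely many intervals if needed. Then let $\lambda\to+\infty$ and $\lambda\to-\infty$ to conclude that $u$ vanishes in half-spaces $\{x_1>c\}$ and $\{x_1<-c\}$ for every $c$, hence $u\equiv 0$.

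An alternative to (ii)--(iv) is more elementary and exploits the structure of the ZK equation directly. Take the Fourier transform in $x$: since $u(t)$ has compact support, $\hat u(\xi,t)$ is entire in $\xi\in\mathbb{C}^n$ of exponential type. The equation gives $\partial_t\hat u=i\xi_1|\xi|^2\hat u+\widehat{Vu}$. Evaluating along complex directions $\xi_1\in i\mathbb{R}$ produces growth $e^{\pm t|\xi_1|^3}$, which contradicts the Paley--Wiener bound $e^{B|\operatorname{Im}\xi|}$ unless $u\equiv 0$. A Gronwall/Duhamel argument handles the nonlinear term, since $Vu$ is also of the same exponential type.

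The main obstacle is step (ii): obtaining a Carleman inequality with constant uniform in $\lambda$ in $n=2,3$ dimensions. The conjugated symbol vanishes on a nontrivial real variety, so uniform resolvent estimates require careful restriction-type analysis. I would first try the Paley--Wiener/Fourier route, because there the cubic growth in $\xi_1$ makes the contradiction essentially algebraic. The delicate point in that route is controlling $\widehat{Vu}$ uniformly for complex frequencies.
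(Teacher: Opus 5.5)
The paper does not prove this statement; it imports it from \cite{DMS}. Your proposal therefore has to stand on its own, and neither route does.

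\textbf{Carleman route.} The real obstruction is in (iii)--(iv), not in (ii). Grant a $\lambda$-uniform estimate, apply it to $g=\theta(t)u$, and absorb $Vg$ on a short slab. For $\lambda>0$ you are left with
\[
e^{\lambda c}\,\|u\|_{L^p(\{x_1>c\}\times\{|t|<T/2\})}\le\|e^{\lambda x_1}\theta u\|_{L^p}\lesssim\|e^{\lambda x_1}\theta'(t)u\|_{L^{p'}}\lesssim e^{\lambda B}.
\]
The last bound holds because $\theta'u$ is supported in $\{T/2\le|t|\le T\}\times\mathcal{B}$. There the weight $e^{\lambda x_1}$ reaches its maximum over the whole support of $u$. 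For $c<B$ the resulting bound $e^{\lambda(B-c)}$ gets worse as $\lambda\to+\infty$, and the same happens for $\lambda\to-\infty$. A weight depending only on $x_1$ cannot separate the time cut-off region from the interior, so no choice of mixed norms in (ii) repairs this. You would need a space--time weight that is strictly smaller near $t=\pm T$ than at some interior point of $\supp u$, for instance with level sets like $x_1=c+\kappa t^2$. The corresponding Carleman inequality is of pseudo-convex (Saut--Scheurer) type. It does not follow from the translation-invariant multiplier bound you sketch, since after conjugation the operator no longer has constant coefficients.

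\textbf{Fourier route.} Your linear computation is correct. For $\partial_tu+\partial_{x_1}\Delta u=0$, along $\xi=(is,\eta)$ the evolution factor is $e^{t(s^3-s|\eta|^2)}$. Comparing $t=\pm T$ with $|\hat u(\xi,t)|\lesssim e^{B|s|}$ then forces $\hat u\equiv0$. The nonlinear step, which you flag as delicate and then dispose of by Gronwall/Duhamel, fails.

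Gronwall at fixed $\xi$ would need $|\widehat{Vu}(\xi,t)|\lesssim|\hat u(\xi,t)|$. That is false, because $\widehat{Vu}$ is a convolution in frequency. What you actually have is $\widehat{u\partial_{x_1}u}=\frac{i\xi_1}{2}\widehat{u^2}$, with $|\widehat{u^2}(is,\eta,t)|\le\|u(t)\|_{L^2}^2e^{B|s|}$ for real $\eta$. Insert this into Duhamel from the endpoint where the linear factor decays. For $\eta=0$, fixed $t<T$ and $s\to+\infty$ this gives
\[
|\hat u(is,0,t)|\le e^{-(T-t)s^3}|\hat u(is,0,T)|+\int_t^Te^{-(\tau-t)s^3}\,\tfrac{s}{2}\,|\widehat{u^2}(is,0,\tau)|\,d\tau\lesssim s^{-2}e^{Bs}.
\]
This is only a polynomial gain, and polynomial gains are fully compatible with Paley--Wiener. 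So the fact that the nonlinear term has the same exponential type as $\hat u$ is precisely the obstruction; it is not what rescues the argument. Getting past it requires a genuinely nonlinear argument, which is the hard part of Bourgain-type complex-analytic proofs of such results. Your plan does not contain one.
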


We also extend the exponential decay to the $H^1(\R^n)$-norm when the dissipative mechanism acts on the whole space $\R^n$. More precisely, we prove the following result.

\begin{theorem}\label{teo2}
	Let $a \in W^{1,\infty}(\mathbb{R}^n)$ be such that 
	\begin{equation}\label{dc2}
		a(x) \geq \alpha_0 > 0, \qquad \forall\,x \in \mathbb{R}^n.
	\end{equation}
	Then, given $\delta \in (0,\alpha_0)$ and $L>0$, there exists a constant $C=C(\delta,L)>0$ such that
	\begin{equation}\label{H1dec}
		\|u(\cdot,t)\|_{H^1} \leq C e^{-\delta t}\|u_0\|_{H^1},
	\end{equation}
	for any mild solution $u \in X_T^1$ of \eqref{eqZK1dampped} with $u_0 \in H^1(\mathbb{R}^n)$ satisfying $\|u_0\|_{H^1(\mathbb{R}^n)} \leq L$.
\end{theorem}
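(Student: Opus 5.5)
The plan is to combine the elementary $L^2$ decay \eqref{energydampped} with an energy estimate for the gradient, and to control the cubic term through Gagliardo--Nirenberg. By density and continuous dependence in $X_T^1$, I will work with smooth solutions throughout.

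\textbf{Step 1: $L^2$ decay.} Multiplying \eqref{eqZK1dampped} by $u$ and integrating, the dispersive and nonlinear terms vanish. Since $a\ge\alpha_0$, this gives
\[
\|u(t)\|_{L^2}\le e^{-\alpha_0 t}\|u_0\|_{L^2}\le L e^{-\alpha_0 t}.
\]

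\textbf{Step 2: gradient identity.} Rather than using the modified Hamiltonian, I will multiply the equation by $-\Delta u$ and integrate. The dispersive term $\int \partial_{x_1}\Delta u\,\Delta u$ vanishes. For the damping term,
\[
\int a u(-\Delta u)=\int a|\nabla u|^2+\int u\,\nabla a\cdot\nabla u .
\]
For the nonlinear term, one integration by parts gives
\[
\int u\,\partial_{x_1}u\,(-\Delta u)=\sum_j\int \partial_{x_j}(u\,\partial_{x_1}u)\,\partial_{x_j}u=\sum_j\int \partial_{x_j}u\,\partial_{x_1}u\,\partial_{x_j}u+\int u\,\partial_{x_1}\tfrac12|\nabla u|^2 .
\]
Integrating the last piece by parts once more, the nonlinear term reduces to $\int \partial_{x_1}u\,|\nabla u|^2-\tfrac12\int \partial_{x_1}u\,|\nabla u|^2=\tfrac12\int \partial_{x_1}u\,|\nabla u|^2$, which is cubic in the gradient. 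This is the main obstacle: in dimensions $2$ and $3$ it cannot be bounded by $H^1$ norms alone.

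To avoid it, I will instead use $H(u)$ from \eqref{hamiltonian}, where the nonlinear contributions cancel. A direct computation gives
\[
\frac{d}{dt}H(u(t))=-2\int a|\nabla u|^2-2\int u\,\nabla a\cdot\nabla u+\int a u^3 .
\]

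\textbf{Step 3: estimate for $H$.} Using $a\ge\alpha_0$ and $a\in L^\infty$, and setting $b=\|a\|_{L^\infty}$, I get
\[
\frac{d}{dt}H+2\alpha_0\|\nabla u\|_{L^2}^2\le 2\|\nabla a\|_{L^\infty}\|u\|_{L^2}\|\nabla u\|_{L^2}+b\|u\|_{L^3}^3 .
\]
Gagliardo--Nirenberg for $n\le3$ gives $\|u\|_{L^3}^3\le C\|u\|_{L^2}^{3-n/2}\|\nabla u\|_{L^2}^{n/2}$. Let $\varepsilon=2(\alpha_0-\delta)$. By Young's inequality, the first term on the right is bounded by $\tfrac{\varepsilon}{2}\|\nabla u\|_{L^2}^2+C_\varepsilon\|u\|_{L^2}^2$. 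For the second term, set $y(t)=\|\nabla u(t)\|_{L^2}$, and note $y^{n/2}\le y^2+1$ since $n/2\ge1$. This is crude, so instead I will write
\[
\|u\|_{L^2}^{3-n/2}\,y^{n/2}=\bigl(\|u\|_{L^2}^{3-n/2}\bigr)\,y^{n/2}\le \tfrac{\varepsilon}{2}y^2+C_\varepsilon\|u\|_{L^2}^{(3-n/2)\cdot\frac{4}{4-n}} ,
\]
where the exponent on the last term is $\ge2$.

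\textbf{Step 4: closing with Gronwall.} Combining with Step 1, the right-hand side is at most $(2\alpha_0-\varepsilon)y^2+\varepsilon y^2+C(L,\delta)e^{-2\alpha_0 t}\|u_0\|_{L^2}^2$. Hence
\[
\frac{d}{dt}H+2\delta\, y^2\le C e^{-2\alpha_0 t}\|u_0\|_{L^2}^2 .
\]
Next, $H\ge y^2-\tfrac13\|u\|_{L^3}^3$, and $|\,\|u\|_{L^3}^3|\le \tfrac{1}{2}y^2+C\|u\|_{L^2}^{p}$, so $y^2\le 2H+C(L)e^{-2\alpha_0 t}\|u_0\|_{L^2}^2$. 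Therefore the quantity $F=H+K\|u\|_{L^2}^2$, with $K$ large, is comparable to $\|u\|_{H^1}^2$. It satisfies $F'+2\delta F\le Ce^{-2\alpha_0t}\|u_0\|^2_{L^2}$, using $\frac{d}{dt}\|u\|_{L^2}^2\le-2\alpha_0\|u\|_{L^2}^2$ and $\delta<\alpha_0$.

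Gronwall then gives $F(t)\le e^{-2\delta t}(F(0)+C\|u_0\|_{L^2}^2)$. Finally $F(0)\le C(L)\|u_0\|_{H^1}^2$, using $\|u_0\|_{L^3}^3\le C\|u_0\|_{H^1}^3\le CL\|u_0\|_{H^1}^2$. This yields \eqref{H1dec}.
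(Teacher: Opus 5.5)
Your argument is in substance the paper's proof. Both use the functional $H$ from \eqref{hamiltonian}, the same differential inequality $\frac{d}{dt}H+2\alpha_0\|\nabla u\|_{L^2}^2\le 2\|\nabla a\|_{L^\infty}\|u\|_{L^2}\|\nabla u\|_{L^2}+\|a\|_{L^\infty}\|u\|_{L^3}^3$, Gagliardo--Nirenberg plus Young for the cubic term, and Gronwall against the $L^2$ decay.

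The difference is in how you close. The paper moves a fraction $b$ of the cubic term to the left, so that the left-hand side reads $H'+(2\alpha_0-\varepsilon)b\,H$. It then runs Gronwall on $H$ itself and asserts $c\|\nabla u\|_{L^2}^2\le H$. You run Gronwall on $F=H+K\|u\|_{L^2}^2$ instead, and this is the more careful choice, for two reasons. First, $H$ alone is not coercive and can be negative, so the paper's last step really needs $c\|\nabla u\|_{L^2}^2\le H+C(L)\|u\|_{L^2}^2$, which is exactly your $F$. Second, you track that every constant multiplies $\|u_0\|_{H^1}^2$, which is what the linear right-hand side of \eqref{H1dec} requires.

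One step fails as written. With $\varepsilon=2(\alpha_0-\delta)$ you get exactly $H'+2\delta y^2\le C\|u\|_{L^2}^2$, and then
\[
F'+2\delta F\le -\tfrac{2\delta}{3}\int_{\mathbb{R}^n}u^3\,dx+\bigl(C-2(\alpha_0-\delta)K\bigr)\|u\|_{L^2}^2 .
\]
The term $2\delta y^2$ contained in $2\delta H$ cancels all of your dissipation. The leftover cubic term is only bounded by $\|u\|_{L^2}^{3-n/2}y^{n/2}$, and no $y^2$ remains to absorb it. For $n=3$ it is not $O(e^{-2\alpha_0 t}\|u_0\|_{L^2}^2)$ even if $y$ is known to be bounded.

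The fix uses the strict inequality $\delta<\alpha_0$. Take $\varepsilon=\alpha_0-\delta$, so that $H'+(\alpha_0+\delta)y^2\le C\|u\|_{L^2}^2$. The surplus $(\alpha_0-\delta)y^2$ in $F'+2\delta F$ then absorbs $\frac{2\delta}{3}\|u\|_{L^3}^3$ via Gagliardo--Nirenberg and Young. This costs $C\|u\|_{L^2}^{2(6-n)/(4-n)}\le C(L)\|u\|_{L^2}^2$, and $F'+2\delta F\le C(L,\delta)e^{-2\alpha_0t}\|u_0\|_{L^2}^2$ follows.

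Two smaller corrections. Your bound $y^2\le 2H+\dots$ can fail when $H<0$. What you actually get is $y^2\le\frac65H+C\|u\|_{L^2}^{p}$, and that is all you need for $F\gtrsim\|u\|_{H^1}^2$. Also, with $a$ only in $W^{1,\infty}$, smooth data do not produce smooth solutions, so your density step should mollify $a$ as well. This preserves $a\ge\alpha_0$ and the bound on $\|a\|_{W^{1,\infty}}$. With these adjustments the argument is complete.
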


As far as we know, this is the first result on the exponential decay of solutions in $H^1(\R^n)$ for $n$-dimensional dispersive equations with damping. To prove Theorem \ref{teo2}, we follow the approach of Cavalcanti et al.~\cite{CavFamNat}, where the authors established a similar result for the damped KdV equation. More precisely, they proved the exponential decay of the $H^k(\R)$-norm, $k\in\N$, provided that $a\in W^{k,\infty}(\R)$ satisfies \eqref{dc2}. 

It is important to note that the study of decay rates at the $H^k$-level, $k\ge2$, is not feasible for the Zakharov--Kuznetsov equation. Indeed, unlike the KdV equation, which possesses infinitely many conserved quantities, the ZK equation preserves only the two quantities \eqref{energy} and \eqref{hamiltonian} along the flow.

The remainder of this paper is organized as follows. Section 2 is devoted to notation and auxiliary results. In Section 3 we present a sketch of the proof of well-posedness for the IVP \eqref{eqZK1dampped}, derive a key local smoothing effect, and prove an observability inequality for \eqref{eqZK1dampped}. In Section 4 we prove Theorems \ref{teo1} and \ref{teo2}. Finally, in the appendix we establish the regularity of the solutions to \eqref{eqZK1dampped}.

\section{Notation and Auxiliary Results}

\subsection{Notation}

\begin{enumerate}
	\item Given any positive constants $C$ and $D$, by $C\lesssim D$ we mean that there exists a constant $c>0$ such that
	\[
	C\leq cD.
	\]
	Moreover, $C\sim D$ means that $C\lesssim D$ and $D\lesssim C$.
	
	\item Given $r>0$, we denote
	\[
	Q_r =[-r,r]\times\R^{n-1}= \{ x \in \mathbb{R}^n \; : \; |x_1| \leq r \}.
	\]
	
	\item For domains $\Omega\neq\R^n$, $L^{p}(\Omega)$ norms will be written as $\|\cdot\|_{L^{p}(\Omega)}$, and $L^{p}(\R^n)$ norms will be written simply as $\|\cdot\|_{L^{p}}$ whenever no confusion arises.
	
	For $T>0$ and $1\le p,q \le \infty$, we will use the Lebesgue space-time norms $L^p(\Omega)L^q_T$ and $L^q_TL^p(\Omega)$, defined by
	\[
	\|f\|_{L^{p}(\Omega)L^q_T}
	=
	\Big\|
	\|f(x,t)\|_{L^q([0,T])}
	\Big\|_{L^p(\Omega)},
	\qquad
	\|f\|_{L^{q}_TL^p(\Omega)}
	=
	\Big\|
	\|f(x,t)\|_{L^p(\Omega)}
	\Big\|_{L^q([0,T])}.
	\]
\end{enumerate}

\subsection{Technical Lemmas}

Here we present some technical results that will be crucial for our analysis.

\begin{lemma}[Gagliardo--Nirenberg inequality]
	Let $m,n,j\in\mathbb{N}$, $\alpha\in(\mathbb{Z}_+)^n$ with $|\alpha|=j$, and let $p,q,r\ge1$. Suppose that $\theta$ satisfies
	\[
	\frac{1}{p}-\frac{j}{n}
	=
	\theta\left(\frac{1}{q}-\frac{m}{n}\right)
	+
	\frac{1-\theta}{r},
	\qquad
	\theta\in\left[\frac{j}{m},1\right].
	\]
	Then there exists a constant $c=c(j,m,n,p,q,r)>0$ such that
	\[
	\|\partial_x^{\alpha} f\|_{L^p(\mathbb{R}^n)}
	\le
	c
	\sum_{|\beta|=m}
	\|\partial^{\beta} f\|_{L^q(\mathbb{R}^n)}^{1-\theta}
	\|f\|_{L^r(\mathbb{R}^n)}^{\theta}.
	\]
\end{lemma}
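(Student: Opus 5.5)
The plan is to compare the inequality as worded, with exponent $1-\theta$ on $\sum_{|\beta|=m}\|\partial^\beta f\|_{L^q}$ and $\theta$ on $\|f\|_{L^r}$, against the scaling condition in the hypothesis. The first step is a dilation test. Put $f_\lambda(x)=f(\lambda x)$. Then
\[
\|\partial^\alpha f_\lambda\|_{L^p}=\lambda^{\,j-\frac np}\|\partial^\alpha f\|_{L^p},\qquad
\|\partial^\beta f_\lambda\|_{L^q}=\lambda^{\,m-\frac nq}\|\partial^\beta f\|_{L^q},\qquad
\|f_\lambda\|_{L^r}=\lambda^{-\frac nr}\|f\|_{L^r}.
\]
Hence the inequality as worded can hold for all $f$, with $c$ independent of $\lambda$, only if
\[
j-\tfrac np=(1-\theta)\Big(m-\tfrac nq\Big)-\theta\,\tfrac nr .
\]
After multiplying by $-1/n$, the hypothesis says instead that
\[
j-\tfrac np=\theta\Big(m-\tfrac nq\Big)-(1-\theta)\tfrac nr .
\]
Subtracting the two identities gives $(1-2\theta)\big(m-\tfrac nq+\tfrac nr\big)=0$.

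\textbf{Case $\theta=\tfrac12$ or $m-\tfrac nq+\tfrac nr=0$.} In this case the stated exponents $(1-\theta,\theta)$ are consistent with scaling. I would prove it from the standard Gagliardo--Nirenberg inequality of Nirenberg, where $\theta$ sits on the derivative norm. If $\theta=\tfrac12$, the two forms coincide. If $m-\tfrac nq+\tfrac nr=0$, the quantity $A=\sum_{|\beta|=m}\|\partial^\beta f\|_{L^q}$ and $B=\|f\|_{L^r}$ scale in the same way. Taking $\inf_\lambda$ of the standard bound $A^\theta B^{1-\theta}$ over the dilates $f_\lambda$ then gives no extra freedom. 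The realistic route here is to apply the standard inequality with $\theta$ replaced by $\theta'=1-\theta$. This requires checking that $\theta'\in[j/m,1]$ and that $\theta'$ satisfies the defining relation, which is exactly the scaling identity above. The standard inequality itself would be proved in the usual way. For $m=1$ one uses the Sobolev/Nash-type argument, writing $f$ as an integral of its gradient, applying H\"older, and interpolating. One then inducts on $m$ using the one-dimensional interpolation $\|f'\|^2\lesssim\|f\|\,\|f''\|$ along coordinate lines.

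\textbf{Generic case.} Otherwise I would show that the statement as worded fails. Choose any nonzero Schwartz function $f$ for which both sides are finite and positive, and replace it by $f_\lambda$. The ratio of the left side to the right side behaves like $\lambda^{(1-2\theta)(m-\frac nq+\frac nr)}$. Letting $\lambda\to0$ or $\lambda\to\infty$, according to the sign of this exponent, makes the ratio unbounded, so no constant $c$ works.

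I therefore expect the proof to go through for exactly the consistent cases and to fail otherwise. The main obstacle is not technical. It is that, in the generic case, the statement as printed has the exponents transposed relative to its own scaling relation. The paper presumably uses only the standard form, with $\theta$ on $\|\partial^\beta f\|_{L^q}$, in its later applications. Those applications are the $L^3$ and $L^4$ estimates in $n=2,3$ controlled by the $H^1$ and $L^2$ norms. I would verify each such use against the correct exponents.
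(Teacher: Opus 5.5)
The paper gives no proof of this lemma. It only states it, evidently intending Nirenberg's inequality, in which $\theta$ sits on $\sum_{|\beta|=m}\|\partial^\beta f\|_{L^q}$ and $1-\theta$ sits on $\|f\|_{L^r}$. Your diagnosis that the printed exponents are transposed is correct, and so is your dilation argument in the generic case. The ratio actually scales like $\lambda^{(2\theta-1)(m-\frac nq+\frac nr)}$, the opposite sign from what you wrote, but you only use that the exponent is nonzero. Concretely, $n=3$, $j=0$, $m=1$, $p=4$, $q=r=2$ forces $\theta=\frac34$. The printed statement then becomes $\|f\|_{L^4(\mathbb{R}^3)}\lesssim\|f\|_{L^2}^{3/4}\|\nabla f\|_{L^2}^{1/4}$, which is false. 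This is exactly the form in which the $L^4$ bound is used, for $n=3$, in the proof of Lemma \ref{lemma2aux}. So the check you propose at the end would catch a real misuse there. The applications in Lemma \ref{VRnL2.3} and in the proof of Theorem \ref{teo2} are correctly oriented.

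The genuine gap is your positive claim that the printed inequality holds in every scale-consistent case. Scaling is only a necessary condition. When $m-\frac nq+\frac nr=0$, the hypothesis collapses to $\frac1p-\frac jn=\frac1r$ for every $\theta$, so the defining relation for $\theta'=1-\theta$ is automatic. But the other condition you flagged and did not check, $1-\theta\ge\frac jm$, is not implied by $\theta\in[\frac jm,1]$, and when it fails the printed inequality is false.

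To see this, take $f_N(x)=\phi(x)\cos(Nx_1)$ with $\phi\in C_c^\infty(\mathbb{R}^n)$ nonzero, and $\alpha=(j,0,\dots,0)$. Then $\|\partial^\alpha f_N\|_{L^p}\gtrsim N^j$, $\|\partial^\beta f_N\|_{L^q}\lesssim N^m$ and $\|f_N\|_{L^r}\lesssim 1$. So the inequality would force $N^j\lesssim N^{m(1-\theta)}$ as $N\to\infty$. The cleanest instance is $j=m$, which forces $\theta=1$ and $p=q$. With $n=3$, $j=m=1$, $p=q=2$, $r=6$, all hypotheses hold and $m-\frac nq+\frac nr=0$. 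The statement then reads $\|\partial_{x_1}f\|_{L^2(\mathbb{R}^3)}\lesssim\|f\|_{L^6(\mathbb{R}^3)}$, which is scale invariant but false.

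There is also a second failure at the endpoint. Take $j=\theta=0$, $m=\frac nq$ with $1<q<\infty$, and $p=r=\infty$. These satisfy the hypotheses and give $\|f\|_{L^\infty}\lesssim\sum_{|\beta|=m}\|\partial^\beta f\|_{L^{n/m}}$, which is precisely the case Nirenberg's theorem excludes.

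So the correct conclusion is that the printed statement can hold only when $1-\theta$ is itself an admissible exponent in Nirenberg's theorem, as it is when $\theta=\frac12$. The lemma should be restated with $\theta$ on the derivative term rather than proved as printed.
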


\begin{lemma}\label{auxlemma4.1}
	Let $f \in H^1(\mathbb{R})$ and let $\psi$ be a smooth non-negative function. Then, for every $\varepsilon > 0$, the following inequality holds:
	\begin{align*}
		\int_{\mathbb{R}} f^3(x) \psi(x)\, dx \leq\;&
		\varepsilon \int_{\mathbb{R}} (f'(x))^2 \psi(x)\, dx
		+ \frac{3}{4}\varepsilon^{-1/3}
		\left(\int_{\mathbb{R}}\psi(x)f^2(x)\, dx\right)
		\left(\int_{\mathbb{R}}f^2(x)\, dx\right)^{2/3}\\
		&+
		\left(\int_{\mathbb{R}}|\psi'(x)|f^2(x)\, dx\right)^{1/2}
		\left(\int_{\mathbb{R}}\psi(x)f^2(x)\, dx\right)^{1/2}
		\left(\int_{\mathbb{R}}f^2(x)\, dx\right)^{1/2}.
	\end{align*}
\end{lemma}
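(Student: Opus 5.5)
We need to bound $\int f^3\psi$. The idea is to write $f^3\psi = (f^2\psi^{1/2})\cdot(f\psi^{1/2})$, which gives
\[
\int f^3\psi \le \|f^2\psi^{1/2}\|_{L^\infty}\,\|\psi^{1/2}f\|_{L^2}\,\|f\|_{L^2}\cdot\Big(\text{rearrangement}\Big).
\]
A cleaner bookkeeping is to use
\[
\int f^3\psi \le \sup_x\big(|f|\psi^{1/2}\big)\cdot \Big(\int \psi f^2\Big)^{1/2}\Big(\int f^2\Big)^{1/2},
\]
which follows from Cauchy--Schwarz applied to $|f|\psi^{1/2}\cdot |f|$. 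So the whole problem reduces to an $L^\infty$ bound on $g:=f^2\psi$ via the one-dimensional fundamental theorem of calculus. Since $f\in H^1(\mathbb R)$, we have $f^2\psi\to 0$ at $-\infty$ (for bounded $\psi$; otherwise we first argue with compactly supported approximations and pass to the limit). Then
\[
f^2(x)\psi(x)=\int_{-\infty}^x \big(2ff'\psi + f^2\psi'\big)\,dy
\le 2\Big(\int \psi f'^2\Big)^{1/2}\Big(\int\psi f^2\Big)^{1/2} + \int|\psi'|f^2 .
\]

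Taking square roots and using $\sqrt{a+b}\le\sqrt a+\sqrt b$, we get
\[
\sup|f|\psi^{1/2}\le \sqrt2\Big(\int\psi f'^2\Big)^{1/4}\Big(\int\psi f^2\Big)^{1/4} + \Big(\int|\psi'|f^2\Big)^{1/2}.
\]
Plugging this into the Cauchy--Schwarz bound, the second term yields exactly the third term of the claimed inequality. The first term gives
\[
\sqrt2\, A^{1/4}B^{3/4}M^{1/2},\qquad A=\int\psi f'^2,\ B=\int\psi f^2,\ M=\int f^2 .
\]

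Next we apply Young's inequality with exponents $4$ and $4/3$: $xy\le \varepsilon x^4 + \tfrac34(4\varepsilon)^{-1/3}y^{4/3}$, taking $x=A^{1/4}$ and $y=\sqrt2 B^{3/4}M^{1/2}$. This gives $y^{4/3}=2^{2/3}BM^{2/3}$, so the coefficient becomes $\tfrac34\,4^{-1/3}2^{2/3}\varepsilon^{-1/3}=\tfrac34\varepsilon^{-1/3}$, exactly matching the claimed constant.

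The result is therefore $\varepsilon A+\tfrac34\varepsilon^{-1/3}BM^{2/3}+(\int|\psi'|f^2)^{1/2}B^{1/2}M^{1/2}$, as claimed.

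The main technical point is justifying the vanishing of $f^2\psi$ at $-\infty$ for general smooth $\psi$. If $\psi$ or $\psi'$ is unbounded, the right-hand side may be infinite, in which case the inequality is trivial. Otherwise we handle it by density: approximate $f$ by $C_c^\infty$ functions, or truncate $\psi$ with a cutoff and use monotone convergence.
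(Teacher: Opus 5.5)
Your proof is correct, and the constants come out exactly. The Cauchy--Schwarz bound $\int f^3\psi\le \sup(|f|\psi^{1/2})\,B^{1/2}M^{1/2}$, the fundamental-theorem-of-calculus bound $\sup f^2\psi\le 2A^{1/2}B^{1/2}+\int|\psi'|f^2$, and Young's inequality with exponents $4$ and $4/3$ together give precisely $\varepsilon A+\frac34\varepsilon^{-1/3}BM^{2/3}$ plus the third term. The paper gives no proof of its own and only cites inequality (1.18) of \cite{CavFamNat}. Your argument is the standard weighted sup-norm interpolation one would expect behind that citation, and the way the constant $\frac34\varepsilon^{-1/3}$ falls out suggests it is essentially the same route.

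A few small points about the write-up.

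\begin{itemize}
\item You do not need density at the end. Suppose the right-hand side is finite and $\int\psi f^2>0$ (otherwise both sides are trivially ordered). Then $g=f^2\psi$ is locally absolutely continuous, $g'=2ff'\psi+f^2\psi'\in L^1(\mathbb{R})$, and $g\in L^1(\mathbb{R})$. Hence $g$ has a limit at $-\infty$, and that limit must be $0$.
\item This is cleaner than approximating $f$ by $C_c^\infty$ functions. That approximation does not obviously give $\int\psi (f_k')^2\to\int\psi (f')^2$ when $\psi$ is unbounded.
\item If you truncate $\psi$ by a cutoff $\chi(x/R)$ instead, you must also show that the extra term $\int\psi|\chi'(x/R)|R^{-1}f^2$ vanishes as $R\to\infty$. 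It does, since $B<\infty$.
\item Drop your first displayed heuristic. It is of degree four in $f$ while the left-hand side is cubic. The Cauchy--Schwarz version you switch to is the right one.
\end{itemize}
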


\begin{proof}
	See inequality (1.18) in \cite{CavFamNat}.
\end{proof}

We now employ the Gagliardo--Nirenberg inequality together with Lemma \ref{auxlemma4.1} to establish the following estimate, which will be crucial for our analysis.

\begin{lemma}\label{VRnL2.3}
	Let $f \in H^1(\mathbb{R}^n)$ with $n \in \{2,3\}$, and let $\psi$ be a smooth non-negative function in $W^{1,\infty}(\mathbb{R})$ satisfying $0 \leq \psi' \leq \psi$. Then, for every $\varepsilon > 0$, there exists a constant $c(n,\varepsilon,\|\psi\|_{L^\infty})>0$ such that
	\begin{equation}\label{ineq1.1}
		\left|\int_{\mathbb{R}^n} \psi(x_1)f^3(x)\, dx\right|
		\le
		\varepsilon \int_{\mathbb{R}^n} \psi(x_1)|\nabla f(x)|^2\, dx
		+
		c(n,\varepsilon,\|\psi\|_{L^\infty})
		\left(
		\|f\|_{L^2(\mathbb{R}^n)}^{\frac{2(6-n)}{4-n}}
		+
		\|f\|_{L^2(\mathbb{R}^n)}^{2}
		\right).
	\end{equation}
\end{lemma}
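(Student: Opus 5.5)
Write $x=(x_1,x')$ with $x'\in\mathbb{R}^{n-1}$. The idea is to treat the $x_1$ direction with the one-dimensional Lemma \ref{auxlemma4.1}, which fits the weight $\psi(x_1)$, and the transversal directions with Gagliardo--Nirenberg. Apply Lemma \ref{auxlemma4.1} for a.e. fixed $x'$ to $g_{x'}(x_1)=f(x_1,x')$. Since the lemma is one-sided and we need $|\int\psi f^3|$, apply it also to $-f$ (i.e. to $|f|$-type bounds). Then integrate in $x'$. With $m(x')=\int_{\mathbb{R}} f^2(x_1,x')\,dx_1$ and the hypothesis $|\psi'|\le\psi\le\|\psi\|_{L^\infty}$, this gives
\[
\Big|\int_{\mathbb{R}^n}\psi f^3\Big|\le \varepsilon\int_{\mathbb{R}^n}\psi(\partial_{x_1}f)^2
+ C\int_{\mathbb{R}^{n-1}} m(x')^{5/3}\,dx'\,(\varepsilon^{-1/3}+1)\|\psi\|_{L^\infty}.
\]
Here I used $\int\psi f^2\,dx_1\le\|\psi\|_\infty m$ and $\int|\psi'|f^2\,dx_1\le \|\psi\|_\infty m$, so the last term of the lemma is at most $\|\psi\|_\infty m^{3/2}$. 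Then $m^{3/2}\le m+m^{5/3}$ after splitting $m\le1$ and $m>1$. The piece $\int m\,dx'=\|f\|_{L^2}^2$ already accounts for the $\|f\|_{L^2}^2$ term in \eqref{ineq1.1}. The first term is bounded by $\varepsilon\int\psi|\nabla f|^2$.

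The main obstacle is controlling $\int_{\mathbb{R}^{n-1}} m^{5/3}\,dx'=\|m\|_{L^{5/3}(\mathbb{R}^{n-1})}^{5/3}$. Crucially, this must be done by the weighted transverse gradient $\int\psi|\nabla_{x'}f|^2$ rather than the unweighted one. The unweighted $\|\nabla f\|_{L^2}^2$ cannot be absorbed, because $\psi$ may vanish somewhere.

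To handle this, I would not apply the 1D lemma to $f$ directly. Instead I would first write $\psi f^3=(\sqrt{\psi}f)^2 f$, or better, use the lemma's structure with the weight kept inside. Concretely, I would estimate
\[
\int\psi f^3\le \int_{\mathbb{R}^{n-1}}\|f(\cdot,x')\|_{L^\infty_{x_1}}\int\psi f^2\,dx_1\,dx'
\]
and bound $\|f(\cdot,x')\|_{L^\infty_{x_1}}$ by the 1D Gagliardo--Nirenberg inequality $\|g\|_\infty^2\le\|g\|_2\|g'\|_2$.

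To keep the weight, one uses that $\psi$ is monotone ($\psi'\ge0$). The key point is the bound $\sup_{x_1}\psi f^2\le\int(\psi' f^2+2\psi|f||\partial_1 f|)\,dx_1$, obtained by integrating $\partial_{x_1}(\psi f^2)$ from $-\infty$. This gives a factor $\sqrt{\psi}$-weighted sup in $x_1$. Then, in the $x'$ variables, apply Gagliardo--Nirenberg in dimension $n-1\in\{1,2\}$ to $h=\sqrt{\psi}f$, noting $|\nabla(\sqrt\psi f)|^2\lesssim \psi|\nabla f|^2+\psi f^2$ since $\psi'\le\psi$ (after regularizing $\sqrt{\psi}$, e.g. using $\psi+\eta$). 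This is where the dimension restriction and the exponent enter.

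A scaling count yields $\int\psi f^3\lesssim \|f\|_{L^2}^{(6-n)/2}\big(\int\psi|\nabla f|^2+\|f\|_2^2\big)^{n/4}$. Since $n/4<1$, Young's inequality with exponents $4/n$ and $4/(4-n)$ gives $\varepsilon\int\psi|\nabla f|^2 + c\,\|f\|_2^{2(6-n)/(4-n)} + c\|f\|_2^2$, which is exactly \eqref{ineq1.1}. I expect the delicate part to be verifying this scaling step carefully: getting the anisotropic Gagliardo--Nirenberg exponents to match $n/4$ while keeping every derivative weighted by $\psi$.
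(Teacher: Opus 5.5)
Your final estimate is true, and your route differs from the paper's. As written, though, the decisive step is asserted rather than proved. A ``scaling count'' only confirms that the exponents in $\int\psi f^3\lesssim\|f\|_{L^2}^{(6-n)/2}\big(\int\psi|\nabla f|^2+\|f\|_{L^2}^2\big)^{n/4}$ are consistent, and that inequality is the entire content of the lemma.

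It also helps to locate where your first attempt failed. Applying Lemma \ref{auxlemma4.1} slice-wise is fine; it is exactly what the paper does, using $|\psi'|\le\psi$ on the third term. The loss is the step $\int\psi f^2\,dx_1\le\|\psi\|_{L^\infty}m(x')$. The paper keeps $A(x')=\int\psi f^2\,dx_1$ intact and proceeds as follows:
\begin{enumerate}
\item H\"older in $x'$: $\int A\,m^{2/3}\,dx'\le\|A\|_{L^3_{x'}}\|f\|_{L^2}^{4/3}$ and $\int A\,m^{1/2}\,dx'\le\|A\|_{L^2_{x'}}\|f\|_{L^2}$.
\item Minkowski: $\|A\|_{L^p_{x'}}\le\int\psi(x_1)\|f(x_1,\cdot)\|_{L^{2p}(\mathbb{R}^{n-1})}^2\,dx_1$.
\item Gagliardo--Nirenberg in $\mathbb{R}^{n-1}$, then H\"older in $x_1$, then Young.
\end{enumerate}
The weight survives at no cost because $\psi$ depends only on $x_1$ and commutes with $\nabla_{x'}$. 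Your anisotropic plan (a weighted sup in $x_1$ from $\partial_{x_1}(\psi f^2)$, then Gagliardo--Nirenberg in $x'$) would, once carried out, just re-prove Lemma \ref{auxlemma4.1} and then repeat these same steps.

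You already hold a much shorter complete proof, which you mention and then drop. Put $h=\sqrt\psi\,f$. Then
\[
\Big|\int_{\mathbb{R}^n}\psi f^3\,dx\Big|\le\int_{\mathbb{R}^n}h^2|f|\,dx\le\|h\|_{L^4}^2\|f\|_{L^2}\lesssim\|h\|_{L^2}^{\frac{4-n}{2}}\|\nabla h\|_{L^2}^{\frac n2}\|f\|_{L^2},
\]
by the isotropic Gagliardo--Nirenberg inequality in $\mathbb{R}^n$, valid for $n\le4$. Since $0\le(\sqrt\psi)'=\psi'/(2\sqrt\psi)\le\tfrac12\sqrt\psi$, one has $|\nabla h|^2\le2\psi|\nabla f|^2+\tfrac12\psi f^2$. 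Also $\|h\|_{L^2}^2\le\|\psi\|_{L^\infty}\|f\|_{L^2}^2$. This gives exactly the bound you obtained by scaling, with constants depending only on $n$ and $\|\psi\|_{L^\infty}$. Young with exponents $4/n$ and $4/(4-n)$ then yields \eqref{ineq1.1} after renaming $\varepsilon$.

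No regularization $\psi+\eta$ is needed either. The condition $0\le\psi'\le\psi$ forces either $\psi\equiv0$ or $\psi>0$ everywhere (Gronwall to the right of a zero, monotonicity to the left). So the real obstruction you identified is $\inf\psi=0$, not zeros of $\psi$.

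As for what each route buys:
\begin{itemize}
\item The paper's proof reuses the one-dimensional inequality of \cite{CavFamNat} and needs Gagliardo--Nirenberg only in dimension $n-1$. The price is mixed-norm bookkeeping and two separate Young steps.
\item The $\sqrt\psi$ argument is self-contained and bypasses Lemma \ref{auxlemma4.1} entirely. It lands directly on the two powers of $\|f\|_{L^2}$ in \eqref{ineq1.1}.
\end{itemize}
Both use the hypothesis the same way: to make the derivative of the weight no larger than the weight itself.
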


\begin{proof}
	By Lemma \ref{auxlemma4.1} and the hypothesis $0 \leq \psi' \leq \psi$, we have
	\begin{align}\label{ineq6.1}
		\left|\int_{\mathbb{R}^n} \psi(x_1)f^3(x) \, dx\right|
		\leq\;&
		\varepsilon\int_{\mathbb{R}^{n - 1}}\left[\int_{\mathbb{R}}\psi(x_1) (\partial_{x_1}f(x_1,y))^2 \, dx_1\right] dy\nonumber\\
		&+ \int_{\mathbb{R}^{n - 1}} \left[ c(\varepsilon)\left(\int_{\mathbb{R}} \psi(x_1) f^2(x_1,y)\, dx_1\right)\left(\int_{\mathbb{R}} f^2(x_1,y)\, dx_1\right)^{\frac{2}{3}}\right] dy\nonumber \\
		&+ \int_{\mathbb{R}^{n - 1}} \left[ \left(\int_{\mathbb{R}} \psi(x_1) f^2(x_1,y)\, dx_1\right)\left(\int_{\mathbb{R}} f^2(x_1,y)\, dx_1\right)^{\frac{1}{2}}\right] dy\nonumber\\
		& =: \varepsilon\int_{\mathbb{R}^n}\psi(x_1) (\partial_{x_1}f)^2 \, dx + ( \ast_1) + (\ast_2).
	\end{align}
	
	To estimate $(\ast_1)$ and $(\ast_2)$ we combine Hölder's inequality, Minkowski's inequality for integrals, and the Gagliardo--Nirenberg inequality to obtain
	\begin{align}\label{ineq5.1}
		( \ast_1)
		&\leq c(\varepsilon)\left(\int_{\mathbb{R}}  \psi(x_1) \| f(x_1, \cdot) \|_{L^6(\mathbb{R}^{n - 1})}^2 \, dx_1\right) \cdot \|f\|_{L^2(\mathbb{R}^n)}^{\frac{4}{3}}\nonumber\\
		& = c(\varepsilon)\left(\int_{\mathbb{R}} \psi(x_1)^{\frac{n-1}{3}}  \|\nabla_y f(x_1, \cdot) \|_{L^2(\mathbb{R}^{n - 1})}^{\frac{2(n - 1)}{3}}\psi(x_1)^{\frac{4-n}{3}} \|f(x_1, \cdot)\|_{L^2(\mathbb{R}^{n - 1})}^{\frac{2(4 - n)}{3}} \, dx_1\right) \cdot \|f\|_{L^2(\mathbb{R}^n)}^{\frac{4}{3}}\nonumber\\
		& \leq c(\varepsilon)\left(\int_{\mathbb{R}}  \psi(x_1) \|\nabla_y f(x_1, \cdot) \|_{L^2(\mathbb{R}^{n - 1})}^2 \, dx_1\right)^{\frac{n - 1}{3}}\nonumber\\ 
		&\hspace{1.0cm}\times\left(\int_{\mathbb{R}}  \psi(x_1) \|f(x_1, \cdot)\|_{L^2(\mathbb{R}^{n - 1})}^2 \, dx_1\right)^{\frac{4 - n}{3}}\|f\|_{L^2(\mathbb{R}^n)}^{\frac{4}{3}}\nonumber\\
		& \leq c(\varepsilon)\| \psi \|_{L^{\infty}(\mathbb{R})}^{\frac{4-n}{3}}\left(\int_{\mathbb{R}^n}  \psi(x_1)|\nabla f(x)|^2 \, dx_1 \right)^{\frac{n - 1}{3}} \|f\|_{L^2(\mathbb{R}^n)}^{\frac{2(6 - n)}{3}}\nonumber\\
		&\leq \varepsilon \int_{\mathbb{R}^n}  \psi(x_1) |\nabla f(x)|^2 \, dx + c( n, \varepsilon, \| \psi \|_{L^{\infty}})\|f\|_{L^2(\mathbb{R}^n)}^{\frac{2(6 - n)}{4-n}},
	\end{align}
	
	\begin{align}\label{ineq7.1}
		( \ast_2)
		&\leq \left\|\int_{\mathbb{R}} \psi(x_1) f^2(x_1,\cdot)\, dx_1\right\|_{L^2(\mathbb{R}^{n - 1})} \| f \|_{L^2(\mathbb{R}^n)}\nonumber\\
		& \leq \left(\int_{\mathbb{R}} \psi(x_1) \|f(x_1, \cdot) \|_{L^4(\mathbb{R}^{n - 1})}^2 \, dx_1\right) \| f \|_{L^2(\mathbb{R}^n)}\nonumber\\
		& \leq \left(\int_{\mathbb{R}} \psi(x_1) \|\nabla f(x_1, \cdot) \|_{L^2(\mathbb{R}^{n - 1})}^{\frac{n - 1}{2}} \| f(x_1, \cdot) \|_{L^2(\mathbb{R}^{n - 1})}^{\frac{5 - n}{2}} \, dx_1\right)\| f \|_{L^2(\mathbb{R}^n)}\nonumber\\
		& \leq \left(\int_{\mathbb{R}} \psi(x_1) \|\nabla f(x_1, \cdot) \|_{L^2(\mathbb{R}^{n - 1})}^2 \, dx_1\right)^{\frac{n-1}{4}} 
		\left(\int_{\mathbb{R}} \psi(x_1) \|f(x_1, \cdot) \|_{L^2(\mathbb{R}^{n - 1})}^2 \, dx_1\right)^{\frac{5 - n}{4}}\| f \|_{L^2(\mathbb{R}^n)}\nonumber\\
		& \leq \| \psi\|_{L^{\infty}(\R)}^{\frac{5 - n}{4}}\left(\int_{\mathbb{R}^n} \psi(x_1) |\nabla f(x) |^2 \, dx_1\right)^{\frac{n - 1}{4}}  \| f \|_{L^2(\mathbb{R}^n)}^{\frac{5-n}{2}}\nonumber\\
		& \leq \varepsilon \int_{\mathbb{R}^n}  \psi(x_1) |\nabla f(x)|^2 \, dx + c( n, \varepsilon, \| \psi \|_{L^{\infty}})\| f \|_{L^2(\mathbb{R}^n)}^{2}.
	\end{align}
	
	Gathering \eqref{ineq6.1}, \eqref{ineq5.1}, and \eqref{ineq7.1}, we obtain \eqref{ineq1.1}.
\end{proof}

\section{Well-posedness and the observability inequality} 

In what follows, we present the well-posedness result needed in this work. The details of the proof of local well-posedness will be omitted. For $n=2$, the result follows from the methods developed by Faminskii \cite{Fa} and by Linares and Pastor \cite{LPas}. For $n=3$, the proof is a direct adaptation of a more comprehensive result established by Herr and Kinoshita in \cite{HK}. These results essentially lead to the following well-posedness result for the IVP with damping \eqref{eqZK1dampped}.

\begin{theorem}\label{TWP}
	Consider the IVP \eqref{eqZK1dampped} with $u_0\in H^1(\R^n)$ and $a\in W^{1,\infty}(\R^n)$ satisfying \eqref{dc2}. Given any $T>0$, there exists a space $X_T\subseteq C([0,T];H^1(\mathbb R^n))$ such that \eqref{eqZK1dampped} admits a unique mild solution $u\in X_T$ defined on the interval $[0,T]$, i.e.,
	\begin{equation}
		u(\cdot,t)= U(t)u_0+\dis\int_0^tU(t-\tau)(u\p_{x_1}u+au)(\tau)\,d\tau,
	\end{equation}
	and
	\begin{equation}
		u\p_{x_1}u,\, a(x)u\in L^2([0,T];L^2(\R^n)),
	\end{equation}
	where $\{U(t)\}_{t\in\R}$ is the unitary group defined by
	\[
	U(t)\phi(x)=\dis\int_{\R^n}e^{i(x\cdot\xi+t\xi_1|\xi|^2)}\widehat{\phi}(\xi)\, d\xi.
	\]
	Moreover, for any $T'\in(0,T)$ there exists a neighborhood $V$ of $u_0$ such that the map $u_0\longmapsto u(t)$ from $H^1(\R^n)$ into $X_T$ is Lipschitz continuous.
\end{theorem}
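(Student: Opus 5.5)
The plan is to treat the damping term $a(x)u$ as a linear perturbation and to rerun the undamped well-posedness theory. For $n=2$ this is the Faminskii / Linares--Pastor framework; for $n=3$ it is the Herr--Kinoshita framework. We work in the Duhamel formulation
\[
u(t)=U(t)u_0-\int_0^tU(t-\tau)\big(u\p_{x_1}u+au\big)(\tau)\,d\tau ,
\]
noting that the sign in front of the integral is immaterial for the argument. We take $X_T^1$ to be the resolution space used in those works, restricted to $[0,T]$. It embeds in $C([0,T];H^1)$ and is built from the linear estimates for $U(t)$: the Kato smoothing estimate $\|\nabla\p_{x_1}U(t)\phi\|_{L^\infty_{x_1}L^2_{x'}L^2_T}\lesssim\|\phi\|_{L^2}$, the maximal function estimate in $L^2_{x_1}L^\infty_{x'}L^\infty_T$, and Strichartz estimates. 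For $n=3$ one uses instead the Fourier restriction norm $X^{s,b}$ adapted to $\xi_1|\xi|^2$.

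First, we show that the map
\[
\Phi(u)=U(t)u_0-\int_0^tU(t-\tau)\big(u\p_{x_1}u+au\big)\,d\tau
\]
is a contraction on a ball of $X_{T_0}$ for small $T_0$. The nonlinear part is handled exactly as in the cited works via the bilinear estimate $\|\int_0^tU(t-\tau)(u\p_{x_1}u)\|_{X_{T_0}}\lesssim T_0^{\theta}\|u\|_{X_{T_0}}^2$.

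The damping part is easier, because $a\in W^{1,\infty}$ maps $H^1$ to $H^1$ boundedly. Indeed $\|\nabla(au)\|_{L^2}\le\|a\|_{W^{1,\infty}}\|u\|_{H^1}$, so Minkowski's inequality and unitarity of $U$ give
\[
\Big\|\int_0^tU(t-\tau)(au)\,d\tau\Big\|_{L^\infty_{T_0}H^1}\le T_0\|a\|_{W^{1,\infty}}\|u\|_{L^\infty_{T_0}H^1}.
\]
For the auxiliary (smoothing and maximal) components of the norm we use the inhomogeneous versions of the linear estimates with the forcing in $L^1_TH^1$, which again costs a factor $T_0$. In the $X^{s,b}$ setting with $b>1/2$ we instead use the smooth-multiplier estimate $\|au\|_{X^{1,b-1}}\lesssim\|au\|_{L^2_TH^1}$, which carries a positive power of $T_0$. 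Hence the linear term only perturbs the contraction constant, and local existence, uniqueness in $X_{T_0}$, and Lipschitz dependence follow on a time $T_0=T_0(\|u_0\|_{H^1},\|a\|_{W^{1,\infty}})$. The regularity statements $u\p_{x_1}u\in L^2_TL^2$ and $au\in L^2_TL^2$ follow from the maximal function/smoothing bounds and boundedness of $a$, respectively.

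The main obstacle is extending to arbitrary $T$, which requires an a priori $H^1$ bound. We intend to derive it for smooth solutions, obtained by parabolic regularization as in Iório--Nunes, and then pass to the limit using the continuous dependence just proved. For the mass, multiplying by $u$ gives $\frac{d}{dt}E(u)=-\int au^2\le 0$, using $a\ge0$. For the Hamiltonian, multiplying by $-2\Delta u-u^2$ and integrating gives
\[
\frac{d}{dt}H(u)=-2\int a|\nabla u|^2-2\int u\,\nabla a\cdot\nabla u+\int a u^3 .
\]
The cubic terms in $H$ and in $\int au^3$ are controlled by Lemma~\ref{VRnL2.3}, applied with $\psi\equiv1$, or by Gagliardo--Nirenberg directly: for $n\le3$ we have $\|u\|_{L^3}^3\lesssim\|\nabla u\|_{L^2}^{n/2}\|u\|_{L^2}^{3-n/2}$, and $n/2<2$ allows absorption with an $\varepsilon$. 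This yields
\[
\|\nabla u(t)\|_{L^2}^2\le C\big(\|u_0\|_{H^1}\big)+C\|a\|_{W^{1,\infty}}\int_0^t\|u\|_{H^1}^2\,d\tau ,
\]
and Gronwall's inequality gives a bound on $[0,T]$ for every $T$. Iterating the local result then produces the global mild solution in $X_T$, and Lipschitz continuity on $[0,T']$ follows by concatenating the local Lipschitz estimates. The delicate point is justifying these energy identities at $H^1$ regularity, which is why we go through smooth approximations together with the continuous dependence.
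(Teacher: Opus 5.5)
Your proposal is correct and follows the same overall route as the paper. Local well-posedness comes from rerunning the Faminskii/Linares--Pastor ($n=2$) and Herr--Kinoshita ($n=3$) fixed-point arguments, and globalization comes from the identities \eqref{eq1al}--\eqref{eq2al} together with Gagliardo--Nirenberg. The paper omits the local argument entirely, and you supply its one new ingredient: the damping is a bounded linear perturbation, since $\|au\|_{H^1}\le\|a\|_{W^{1,\infty}}\|u\|_{H^1}$, so its Duhamel term carries a positive power of $T_0$ in either resolution space.

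\textbf{The a priori $H^1$ bound.} This is where the two arguments genuinely differ.
The paper uses $a\ge\alpha_0$ on all of $\mathbb{R}^n$ to get the dissipative inequality \eqref{D2}, and hence the time-uniform bound \eqref{D3}. You instead discard $-2\int a|\nabla u|^2\le 0$ and close with Gronwall. Your bound grows with $T$, which is harmless for global existence, and it uses only $a\ge 0$. That is in fact what the rest of the paper needs: Theorem~\ref{teo1} and Lemma~\ref{lemma2aux} rely on global solutions for a damping satisfying only \eqref{damping cond}, where \eqref{dc2}, and with it \eqref{D2}, fails. The paper's route buys a bound uniform in $t$, but only under \eqref{dc2}.

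\textbf{Two small remarks.}
First, the Kato smoothing for $U(t)$ gains only one derivative:
$\|\nabla U(t)\phi\|_{L^\infty_{x_1}L^2_{x'}L^2_T}\lesssim\|\phi\|_{L^2}$. So the estimate you quote for $\nabla\partial_{x_1}U(t)\phi$ needs $\|\phi\|_{H^1}$ on the right. This does not affect the argument.

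Second, since $a$ is only Lipschitz, you cannot expect persistence of higher regularity, so "smooth solutions" of the damped problem are not directly available. In the approximation step it is cleaner to mollify $a$ as well as $u_0$, taking $a_k=a*\eta_k\ge0$ with $\|a_k\|_{W^{1,\infty}}\le\|a\|_{W^{1,\infty}}$. You then use continuity of the local theory in $a$. This holds because $(a_k-a)u\to 0$ in $L^2_TH^1$: $a_k\to a$ uniformly, and dominated convergence applies to $\nabla a_k\to\nabla a$ a.e.
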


\textbf{Proof of local well-posedness.}
In order to establish local well-posedness, we treat the cases $n=2$ and $n=3$ separately. 

For $n=2$, it is enough to use the method of proof of Theorem 1.1 in \cite{Fa} or Theorem 1.6 in \cite{LPas}, with
\begin{equation} \label{Z}
	X_{T}=\{u \in C([0,T]; H^1(\mathbb R^2)) \ : \ \|u\|_{X_{T}}<\infty\},
\end{equation}
where
\begin{displaymath}
	\|u\|_{X_{T}}:=\left\| u\right\| _{L^{\infty}_{T}H^1(\R^n)}+\|u\|_{L^{\infty}_{x}L^{2}_{T,y}}+\left\| u\right\|_{L^{3}_{T}L^{\infty}_{x,y}}+\|u\|_{L^{2}_xL^{\infty}_{T,y}}.
\end{displaymath}

The case $n=3$ follows by using the method of proof of Theorem 1.1 in \cite{HK}. In this case,
\begin{equation} \label{Z2}
	X_{T}:=X^{1,b}\subseteq C([0, T];H^1(\mathbb R^3)),
\end{equation}
where $X^{s,b}$, $s,b\in\R$, is defined in \cite{HK}, Section 2.1, p. 1207.

\textbf{Proof of global well-posedness.}
In order to extend the local solution globally in time, we use the following two quantities presented in the next lemma.

\begin{lemma}\label{decayL2lemma}
	Let $u$ be the local mild solution of \eqref{eqZK1dampped} with initial data $u_0 \in H^1(\mathbb{R}^n)$. Then, for any $t\in(0,T]$ one has
	\begin{equation}\label{eq1al}
		\int_{\mathbb{R}^n} u^2(x, t)\, dx + 2\int_0^t\int_{\mathbb{R}^n} a(x)u^2(x, \tau)\, dx\, d\tau = \int_{\mathbb{R}^n}u_0(x)^2\, dx
	\end{equation}
	and
	\begin{align}\label{eq2al}
		\int_{\mathbb{R}^n}&\left(|\nabla u(x, t)|^2-\frac13u^3(x, t)\right)dx
		+\int_0^t\int_{\mathbb{R}^n}\left\{\nabla a(x)\cdot\nabla (u^2(x, \tau))+2a(x)|\nabla u(x, t)|^2\right\}dx\, d\tau\nonumber\\ 
		&=\int_{\mathbb{R}^n} \left(|\nabla u_0|^2-\frac13u_0^3\right)dx+\int_0^t\int_{\mathbb{R}^n}a(x)u^3(x, \tau)dx\, d\tau.
	\end{align}	 
\end{lemma}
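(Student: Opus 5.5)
We prove both identities first for smooth, rapidly decaying solutions, where energy-type integrations by parts are legitimate, and then pass to mild solutions $u\in X_T$ by density. Take $u_0^k\in H^\infty(\R^n)$ (for instance Schwartz functions) with $u_0^k\to u_0$ in $H^1(\R^n)$. By the parabolic regularization argument mentioned in the introduction (Iório--Nunes), combined with the regularity result of the appendix, the corresponding solutions $u^k$ are smooth, and $u^k\in C([0,T];H^s)$ for every $s$ on a common interval $[0,T]$. By the Lipschitz continuity in Theorem \ref{TWP}, $u^k\to u$ in $X_T\subseteq C([0,T];H^1(\R^n))$.

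\textbf{The $L^2$ identity \eqref{eq1al}.} Multiply the equation by $2u^k$ and integrate over $\R^n$. The dispersive term vanishes, since
\[
\int 2u\,\p_{x_1}\Delta u\,dx=-2\int \nabla u\cdot\nabla\p_{x_1}u\,dx=-\int\p_{x_1}|\nabla u|^2\,dx=0 .
\]
The nonlinear term also vanishes, since $\int 2u^2\p_{x_1}u\,dx=\tfrac23\int\p_{x_1}(u^3)\,dx=0$. This leaves
\[
\frac{d}{dt}\int (u^k)^2\,dx+2\int a\,(u^k)^2\,dx=0,
\]
and integrating in time gives \eqref{eq1al} for $u^k$.

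\textbf{The $H^1$ identity \eqref{eq2al}.} Multiply the equation by $-2\Delta u^k-(u^k)^2$ and integrate over $\R^n$. Four ingredients are needed.
\begin{enumerate}
\item The time derivative term gives $\frac{d}{dt}\int(|\nabla u|^2-\tfrac13u^3)\,dx$.
\item The terms that cancel are the standard Hamiltonian ones. First, $\int \p_{x_1}\Delta u\,\Delta u\,dx=0$. Second, $\int u\p_{x_1}u\,u^2\,dx=0$. Third, the cross terms cancel: $-2\int u\p_{x_1}u\,\Delta u\,dx-\int \p_{x_1}\Delta u\,u^2\,dx=0$, because integrating the second integral by parts in $x_1$ gives $+2\int u\p_{x_1}u\,\Delta u\,dx$.
\item The damping against $-2\Delta u$ gives, after one integration by parts,
\[
2\int \nabla(au)\cdot\nabla u\,dx=\int\nabla a\cdot\nabla(u^2)\,dx+2\int a|\nabla u|^2\,dx .
\]
\item The damping against $-u^2$ gives $-\int a u^3\,dx$, which is moved to the right-hand side.
\end{enumerate}
Integrating from $0$ to $t$ yields \eqref{eq2al} for $u^k$. (The integrand written as $|\nabla u(x,t)|^2$ in the statement should of course read $|\nabla u(x,\tau)|^2$.)

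\textbf{Passage to the limit.} Let $k\to\infty$. Convergence in $C([0,T];H^1)$ gives convergence of all quadratic terms, using $a\in W^{1,\infty}$. For the cubic terms, the Sobolev embedding $H^1(\R^n)\hookrightarrow L^3(\R^n)$ for $n\le3$ gives
\[
\|u^3-v^3\|_{L^1}\lesssim \|u-v\|_{L^3}\big(\|u\|_{L^3}^2+\|v\|_{L^3}^2\big),
\]
uniformly in $t\in[0,T]$. Hence $\int (u^k)^3\,dx$ and $\int_0^t\!\int a(u^k)^3\,dx\,d\tau$ converge as well.

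\textbf{Main obstacle.} The delicate step is justifying the regularization. We need smooth approximate solutions with enough decay and regularity (at least $H^3$, so that $\p_{x_1}\Delta u\cdot\Delta u$ is integrable and the boundary terms vanish). We also need them to converge to the mild solution in the $X_T$ topology on the whole interval $[0,T]$. We would rely on persistence of regularity (the appendix) together with the continuous dependence in Theorem \ref{TWP}. If necessary, we would first work on a short interval and then iterate, using that \eqref{eq1al} and \eqref{eq2al} control the $H^1$ norm uniformly via Lemma \ref{VRnL2.3} with $\psi\equiv1$.
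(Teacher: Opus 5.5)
Your two multiplier computations are correct and are exactly the paper's proof. The paper takes $u$ smooth, multiplies by $2u$ and by $-(2\Delta u+u^2)$, integrates by parts and integrates in time, and says nothing more. You are also right that $|\nabla u(x,t)|^2$ inside the time integral should read $|\nabla u(x,\tau)|^2$. Your additional approximation step, which the paper omits entirely, is the right instinct, but as written it rests on a false claim.

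With $a$ only in $W^{1,\infty}(\mathbb{R}^n)$, regularizing $u_0$ alone does not give solutions in $C([0,T];H^s)$ for every $s$.
\begin{itemize}
\item The damping term $au$ is no smoother than $a$: it lies in $H^1$, but not in $H^2$ when $a$ has a kink.
\item The Duhamel term $\int_0^t U(t-\tau)(au)(\tau)\,d\tau$ gains nothing at frequencies near the hyperplane $\{\xi_1=0\}$, where the phase $\xi_1|\xi|^2$ is small.
\end{itemize}
So you cannot expect the $H^3$ (or even $H^{5/2}$) regularity needed to give meaning to $\int\partial_{x_1}\Delta u\,\Delta u\,dx$. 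Neither tool you cite rescues this. Parabolic regularization in $H^s$ needs $au\in H^s$, hence more derivatives of $a$. Theorem \ref{smooth solutions} assumes $\supp u\subseteq Q_r$, which is not available here.

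The repair is to mollify the damping as well. Set $a_k=a*\eta_{1/k}$ with $\eta_{1/k}$ a standard mollifier. Then $a_k\ge 0$, $a_k$ has all derivatives bounded, $a_k\to a$ uniformly, $\|\nabla a_k\|_{L^\infty}\le\|\nabla a\|_{L^\infty}$, and $\nabla a_k\to\nabla a$ almost everywhere.
\begin{itemize}
\item The solutions $u^k$ with data $u_0^k$ and damping $a_k$ are then genuinely smooth, so both identities hold for them.
\item They converge to $u$ in $C([0,T];H^1)$ by rerunning the contraction argument behind Theorem \ref{TWP}. The extra term $\|(a_k-a)u\|_{L^1_T H^1}$ tends to $0$ by dominated convergence. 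Note that Theorem \ref{TWP} as stated only gives dependence on $u_0$, not on $a$.
\item The terms $\int_0^t\!\int\nabla a_k\cdot\nabla((u^k)^2)$, $\int_0^t\!\int a_k|\nabla u^k|^2$ and $\int_0^t\!\int a_k (u^k)^3$ pass to the limit by your $L^3$ estimate combined with dominated convergence.
\end{itemize}
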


\begin{proof}
	Let $u$ be a smooth solution of \eqref{eqZK1dampped}. Multiplying equation \eqref{eqZK1dampped} by $2u$ and integrating with respect to $x$, we obtain
	\begin{equation}\label{eq1al1}
		\dfrac{d}{dt}\dis\int_{\R^n}u^2(x,t)\,dx
		+2\dis\int_{\R^n}u(x,t)\Delta\p_{x_1}u(x,t)\,dx
		+2\dis\int_{\R^n}a(x)u^2(x,t)\,dx=0.
	\end{equation}
	Since
	\[
	2\dis\int_{\R^n}u(x,t)\Delta\p_{x_1}u(x,t)\,dx=0,
	\]
	integrating \eqref{eq1al1} from $0$ to $t$ we obtain \eqref{eq1al}.
	
	Finally, multiplying equation \eqref{eqZK1dampped} by $-(2\Delta u+u^2)$ and integrating with respect to $x$, we obtain
	\begin{align}\label{eq2al1}
		\dfrac{d}{dt}\int_{\mathbb{R}^n}\hspace{-0.2cm}\left(|\nabla u|^2-\frac13u^3\right)dx
		+\int_{\mathbb{R}^n}\hspace{-0.2cm}\nabla a(x)\cdot\nabla (u^2)dx
		+2\int_{\mathbb{R}^n}\hspace{-0.3cm}a(x)|\nabla u|^2dx
		-\int_{\mathbb{R}^n}\hspace{-0.3cm}a(x)u^3dx=0.
	\end{align}
	Thus, integrating \eqref{eq2al1} from $0$ to $t$ we obtain \eqref{eq2al}.
\end{proof}

Notice that conditions \eqref{damping cond} and \eqref{eq1al} imply
\begin{equation*}
	\dfrac{d}{dt}\dis\int_{\R^n}u^2(x,t)\,dx\leq-2\alpha_0\dis\int_{\R^n}u^2(x,t)\,dx,
\end{equation*}
which leads to
\begin{equation}\label{ineq1al}
	\|u(t)\|^2_{L^2(\R^n)} \leq e^{-2\alpha_0t}\|u_0\|^2_{L^2(\R^n)}.
\end{equation}

With equalities \eqref{eq1al} and \eqref{eq2al}, we are ready to prove that the IVP \eqref{eqZK1dampped} is globally well-posed in the class $X_T$, for initial data $u_0\in H^1(\R^n)$ and a non-negative function $a(x)$. From \eqref{eq1al}, we obtain
\begin{equation}\label{L2}
	\|u(t)\|_{L^2(\R^n)}\leq\|u_0\|_{L^2(\R^n)}.
\end{equation}

Employing in \eqref{eq2al1} the same tools used to obtain \eqref{eq10theo2.1}, together with \eqref{L2}, we obtain
\begin{equation}\label{D2}
	\dfrac{d}{dt}\|\nabla u(t)\|^2_{L^2(\R^n)}+ 2(\alpha_0 - \varepsilon)\|\nabla u(t)\|^2_{L^2(\R^n)} \leq c(\varepsilon, \| u_0 \|_{L^2}, \| a \|_{W^{1, \infty}}),
\end{equation}
and therefore
\begin{equation}\label{D3}
	\|\nabla u(t)\|^2_{L^2(\R^n)}\leq \|\nabla u_0\|^2_{L^2(\R^n)}+c(\varepsilon, \alpha_0, \| u_0 \|_{L^2}, \| a \|_{W^{1, \infty}}).
\end{equation}
Estimates \eqref{L2} and \eqref{D3} allow us to extend the local solution $u$ to any time interval.


\begin{lemma}\label{lemma1.1aux}
	Consider an integer $n\in\{2,3\}$ and let $u \in C([0, T]; H^1(\mathbb{R}^n))$ be a mild solution of \eqref{eqZK1dampped} with initial data $u_0 \in H^1(\mathbb{R}^n)$. Let $\psi$ be a smooth non-negative function in $W^{1,\infty}(\mathbb{R})$ such that $0 \leq \psi' \leq \psi$. Then, for all $\varepsilon > 0$ there exists a constant $c = c(\varepsilon, \| u_0\|_{L^2}, \| \psi \|_{W^{1,\infty}}) > 0$ such that
	\begin{equation}\label{eq1.1auxlemmas}
		\left|\int_{\mathbb{R}^n} \psi(x_1)u^3(x,t)\,dx\right|
		\leq
		\varepsilon\int_{\mathbb{R}^n} \psi(x_1)|\nabla u(x,t)|^2\,dx + c.
	\end{equation}
\end{lemma}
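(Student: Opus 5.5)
This is essentially a direct corollary of Lemma \ref{VRnL2.3} combined with the $L^2$ bound \eqref{L2}. For each fixed $t\in[0,T]$, the mild solution satisfies $u(\cdot,t)\in H^1(\mathbb{R}^n)$, because $u\in C([0,T];H^1(\mathbb{R}^n))$. So we may apply Lemma \ref{VRnL2.3} with $f=u(\cdot,t)$ and the given weight $\psi$. This gives
\[
\left|\int_{\mathbb{R}^n}\psi(x_1)u^3(x,t)\,dx\right|\le \varepsilon\int_{\mathbb{R}^n}\psi(x_1)|\nabla u(x,t)|^2\,dx + c(n,\varepsilon,\|\psi\|_{L^\infty})\Big(\|u(t)\|_{L^2}^{\frac{2(6-n)}{4-n}}+\|u(t)\|_{L^2}^2\Big).
\]

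Next we control the $L^2$ norms on the right uniformly in $t$ using the energy identity \eqref{eq1al} of Lemma \ref{decayL2lemma}. Since $a\ge 0$, the identity yields \eqref{L2}, namely $\|u(t)\|_{L^2}\le\|u_0\|_{L^2}$. Both exponents $\frac{2(6-n)}{4-n}$ (equal to $4$ for $n=2$ and $6$ for $n=3$) and $2$ are positive, so each power is monotone in the norm. Hence the last term is at most
\[
c(n,\varepsilon,\|\psi\|_{L^\infty})\Big(\|u_0\|_{L^2}^{\frac{2(6-n)}{4-n}}+\|u_0\|_{L^2}^2\Big) =: c.
\]
This constant depends only on $\varepsilon$, $\|u_0\|_{L^2}$ and $\|\psi\|_{W^{1,\infty}}$; the dependence on $n\in\{2,3\}$ can be removed by taking the maximum over the two cases. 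This proves \eqref{eq1.1auxlemmas}.

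The only point needing care is justifying \eqref{eq1al} for a mild solution rather than a smooth one. I would handle it by density. Approximate $u_0$ in $H^1$ by smooth data, with $H^3$ regularity via parabolic regularization or the regularity results in the appendix, for which the multiplier computation is legitimate. Then pass to the limit using the Lipschitz continuity of the data-to-solution map from Theorem \ref{TWP}: convergence in $C([0,T];H^1)$ preserves $\|u(t)\|_{L^2}\le\|u_0\|_{L^2}$. Alternatively, one can verify the lemma's inequality directly for the smooth approximations and pass to the limit in both sides. The cubic term converges because $H^1\hookrightarrow L^3$ for $n\le 3$, and the weighted gradient term converges by $H^1$ convergence and boundedness of $\psi$. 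No other obstacle is expected.
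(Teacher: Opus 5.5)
Your proposal is correct and follows the paper's proof: apply Lemma \ref{VRnL2.3} to $f=u(\cdot,t)$, then bound the resulting powers of $\|u(t)\|_{L^2}$ by those of $\|u_0\|_{L^2}$ via the energy identity. Your use of \eqref{L2}, which needs only $a\ge 0$, is in fact the right bound here (the paper cites the exponential estimate \eqref{ineq1al}, which requires $a\ge\alpha_0$ on all of $\mathbb{R}^n$ and so does not hold under \eqref{damping cond}), and your density justification of \eqref{eq1al} for mild solutions is extra care that the paper omits.
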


\begin{proof}
	By using Lemma \ref{VRnL2.3} with $f=u$ we have, for each $t\in(0,T]$, that
	\begin{align}\label{ineq6.1aux}
		\left|\int_{\mathbb{R}^n} \psi(x_1)u^3\,dx\right|
		\leq\;&
		\varepsilon\int_{\mathbb{R}^n} \psi(x_1)|\nabla u(x,t)|^2\,dx\nonumber\\ 
		&+
		c( n, \varepsilon, \| \psi \|_{L^{\infty}})
		\bigg{(}
		\|u(\cdot,t)\|_{L^2(\mathbb{R}^n)}^{\frac{2(6 - n)}{4-n}}
		+
		\|u(\cdot,t)\|_{L^2(\mathbb{R}^n)}^{2}
		\bigg{)}.
	\end{align}
	
	Thus we obtain \eqref{eq1.1auxlemmas} from \eqref{ineq6.1aux} together with inequality \eqref{ineq1al}.
\end{proof}

\begin{lemma}[Smoothing effect]\label{lemma1aux}
	Consider $n\in\{2,3\}$. Let $u \in C([0, T]; H^1(\mathbb{R}^n))$ be a mild solution of \eqref{eqZK1dampped}. Then, for any $r>0$ there exists a constant $c = c(T, r, \|u_0\|_{L^2(\mathbb{R}^n)})$ such that
	\begin{equation}\label{eq0auxlemmas}
		\int_0^T\int_{Q_r} |\nabla u(x, t)|^2\,dx\,dt \leq c,
	\end{equation}
where $Q_r=[-r, r] \times \mathbb{R}^{n - 1}$.	
\end{lemma}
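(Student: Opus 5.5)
We prove \eqref{eq0auxlemmas} with a Kato-type weighted $L^2$ identity: multiply the equation by $\psi(x_1)u$ for a monotone weight. Fix $r>0$ and pick a smooth $\psi$ with $0\le \psi'\le \psi$, $\psi\in W^{1,\infty}(\R)$, and $\psi'\ge c_r>0$ on $[-r,r]$. A concrete choice is $\psi(x_1)=1+\tanh(x_1)$, suitably rescaled. Then $\psi' = \operatorname{sech}^2 x_1$. Since $\psi'\le 2\psi$, we get $0\le\psi'\le\psi$ after taking $\psi(x_1)=\tfrac12\big(1+\tanh(x_1/2)\big)\cdot$const, or we simply allow a harmless constant in the argument. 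Also $\psi'$ is bounded below on $[-r,r]$ by a constant depending on $r$.

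We work first with smooth solutions, justified by the regularity established in the appendix and by the Lipschitz dependence on the data in Theorem \ref{TWP}; the bound then passes to mild solutions by density. Multiplying \eqref{eqZK1dampped} by $2\psi(x_1)u$ and integrating over $\R^n$ gives, after integration by parts,
\[
\frac{d}{dt}\int \psi u^2\,dx+3\int\psi'(\p_{x_1}u)^2dx+\int\psi'|\nabla_y u|^2dx-\int\psi''' u^2dx-\frac23\int\psi' u^3dx+2\int a\psi u^2dx=0 .
\]
Here the dispersive term uses $2\int\psi u\,\p_{x_1}\Delta u=3\int\psi'(\p_{x_1}u)^2+\int\psi'|\nabla_yu|^2-\int\psi'''u^2$, and the nonlinear term gives $2\int\psi u^2\p_{x_1}u=-\frac23\int\psi'u^3$. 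The damping term is nonnegative and can be dropped. So
\[
\frac{d}{dt}\int\psi u^2+\int\psi'|\nabla u|^2\le \|\psi'''\|_{L^\infty}\|u_0\|_{L^2}^2+\frac23\Big|\int\psi' u^3\Big|,
\]
where we used \eqref{L2}.

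The main step is controlling the cubic term by the gradient on the left with weight $\psi'$, not $\psi$. We apply Lemma \ref{lemma1.1aux}, or equivalently Lemma \ref{VRnL2.3}, with $\psi'$ in place of $\psi$. This requires $0\le\psi''\le\psi'$, which fails for $\tanh$ because $\psi''$ changes sign. Two remedies are available. The first is to choose a weight whose derivatives behave well, e.g. $\psi(x_1)=e^{x_1/2}$ cut off: for $x_1\le r+1$ take $\psi$ exponential, and beyond that bend it to a constant in a monotone way, building $\psi'$ itself as a nonnegative function with $|\psi''|\le \psi'$. The second, more robust, route is to observe that the proof of Lemma \ref{VRnL2.3} uses $\psi'\le\psi$ only to bound $\int|\psi'|f^2$ by $\int\psi f^2$. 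That step only needs $|\psi''|\le C\psi'$, which holds for $\psi'=\operatorname{sech}^2$ since $|\psi''|=2|\tanh|\,\psi'\le2\psi'$. With this, the same Hölder/Gagliardo–Nirenberg computation yields
\[
\Big|\int\psi'u^3\Big|\le \tfrac34\int\psi'|\nabla u|^2+c(\|u_0\|_{L^2},\psi).
\]
I expect verifying this modified version of Lemma \ref{VRnL2.3} to be the only delicate point.

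Absorbing this bound, integrating in $t\in[0,T]$, and using $\int\psi u^2(T)\ge0$ and $\int\psi u_0^2\le\|\psi\|_\infty\|u_0\|^2_{L^2}$, we obtain
\[
\int_0^T\!\!\int\psi'|\nabla u|^2\,dx\,dt\le c(T,\|u_0\|_{L^2},\psi).
\]
Since $\psi'\ge c_r>0$ on $Q_r$, this gives \eqref{eq0auxlemmas}.
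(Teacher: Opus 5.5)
Your argument is correct and is essentially the paper's proof: the same Kato multiplier identity (the paper's \eqref{eq3.0katosmooth}, with a bounded increasing weight $\rho$ satisfying $\rho'\equiv 1$ on $[-r,r]$), dropping the damping and final-time terms, and absorbing $\int\psi' u^3$ via Lemma \ref{VRnL2.3} applied with weight $\psi'$. Your remark that this application only needs $|\psi''|\le C\psi'$ (because $0\le\psi''\le\psi'$ cannot hold for a bounded increasing weight) explains the paper's hypothesis $|\rho''|\le\rho'$ and spells out a step the paper glosses over when it cites Lemma \ref{lemma1.1aux}. Keep the $\mathrm{sech}^2$ choice: a $\psi'$ that vanishes identically beyond some point, as in your first remedy, would violate $|\psi''|\le C\psi'$.
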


\begin{proof}
	We adapt the argument presented in \cite{CavFamNat} to the $n$-dimensional setting. The idea was originally introduced by Kato in \cite{Ka}. Let $\rho \in C^{\infty}(\mathbb{R})$ be a smooth, bounded, positive, and increasing function with bounded derivatives such that $\rho'(x_1) \equiv 1$ for $x_1 \in [-r, r]$ and $|\rho''(x_1)| \leq \rho'(x_1)$. Notice that $\rho$ and its derivatives are bounded by a constant depending on $r$. 
	
	Assume that $u$ is sufficiently smooth. We multiply equation \eqref{eqZK1dampped} by $u(x,t)\rho(x_1)$ and integrate over $x \in \mathbb{R}^n$. Using integration by parts and then integrating the resulting expression with respect to $t\in(0,T]$, we obtain
\begin{align}\label{eq3.0katosmooth}
	&\dfrac{1}{2}\int_{\mathbb{R}^n}u(x,t)^2\rho(x_1)\,dx
	+ \dfrac{1}{2}\int_0^t\int_{\mathbb{R}^n}[2u_{x_1}^2(x,\tau) + |\nabla u(x,\tau)|^2]\rho'(x_1)\,dx\,d\tau\nonumber\\
	&- \dfrac{1}{2}\int_0^t\int_{\mathbb{R}^n}u^2(x,\tau)\rho'''(x_1)\,dx\,d\tau
	-\dfrac{1}{3}\int_0^t\int_{\mathbb{R}^n}u^3(x,\tau)\rho'(x_1)\,dx\,d\tau\nonumber\\ 
	&+ \int_0^t\int_{\mathbb{R}^n}a(x)u^2(x,\tau)\rho(x_1)\,dx\,d\tau
	= \dfrac{1}{2}\int_{\mathbb{R}^n}u_0(x)^2\rho(x_1)\,dx.
\end{align}

Since $a(x)$ and $\rho(x_1)$ are non-negative functions, it follows from \eqref{eq3.0katosmooth} that
\begin{align}\label{eq3katosmooth} 
	\int_0^t\!\!\!\int_{\mathbb{R}^{n}} |\nabla u(x,\tau)|^2 \rho'(x_1)\,dx\,d\tau
	\leq\;&  \int_{\mathbb{R}^n}u_0(x)^2\rho(x_1)\,dx
	+ \int_0^t\!\!\!\int_{\mathbb{R}^n}u^2(x,\tau)\rho'''(x_1)\,dx\,d\tau\\ 
	&+\dfrac{2}{3}\int_0^t\!\!\!\int_{\mathbb{R}^n}u^3(x,\tau)\rho'(x_1)\,dx\,d\tau.\nonumber
\end{align}
Therefore, using the boundedness of $\rho$ and its derivatives and applying Lemma \ref{lemma1.1aux} to the last term in \eqref{eq3katosmooth}, we obtain
\begin{align*}
	\int_0^t\!\!\int_{\mathbb{R}^n}|\nabla u(x,\tau)|^2\rho'(x_1)\,dx\,d\tau
	\leq& c(r)\int_{\mathbb{R}^n}u_0(x)^2\,dx 
	+ c(r,\varepsilon)\int_0^t\!\!\int_{\mathbb{R}^n}u^2(x,\tau)\,dx\,d\tau\\ 
	&+\varepsilon\int_0^t\!\!\int_{\mathbb{R}^n}|\nabla u(x,\tau)|^2\rho'(x_1)\,dx\,d\tau.
\end{align*}
Hence, using that $\rho' \equiv 1$ on $[-r,r]$ and choosing $\varepsilon = \tfrac12$, we conclude the desired estimate.
\end{proof}
\begin{lemma}[Observability inequality]\label{lemma2aux}
	Suppose that $a(x)$ is a function satisfying the hypotheses of Theorem \eqref{teo1}. Then, for any $T,L>0$, there exists a constant $c(T,L)>0$ such that every mild solution $u \in C([0,T]; H^1(\mathbb{R}^n))$ of \eqref{eqZK1dampped} with initial data satisfying $\|u_0\|_{H^1(\mathbb{R}^n)} \leq L$ satisfies
	\begin{equation}\label{observability}
		\int_0^T \int_{Q_R} u^2(x,t)\,dx\,dt \leq c \int_0^T \int_{\mathbb{R}^n} a(x)u^2(x,t)\,dx\,dt.
	\end{equation}
\end{lemma}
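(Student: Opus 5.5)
We argue by contradiction and compactness, in the spirit of \cite{CavFamNat, RZ}. Suppose \eqref{observability} fails. Then there are mild solutions $u^k$ of \eqref{eqZK1dampped} with $\|u^k_0\|_{H^1}\le L$ such that
\[
\int_0^T\!\!\int_{Q_R}(u^k)^2\,dx\,dt > k\int_0^T\!\!\int_{\mathbb{R}^n}a(x)(u^k)^2\,dx\,dt .
\]
Set $\lambda_k^2=\int_0^T\int_{Q_R}(u^k)^2$ and $v^k=u^k/\lambda_k$. Then $\int_0^T\int_{Q_R}(v^k)^2=1$ and $\int_0^T\int a (v^k)^2\to 0$. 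Also $\lambda_k\le \sqrt T\,\|u_0^k\|_{L^2}\le \sqrt T L$, so after extracting a subsequence $\lambda_k\to\lambda\in[0,\sqrt T L]$. The function $v^k$ solves
\[
\partial_t v^k+\partial_{x_1}\Delta v^k+\lambda_k v^k\partial_{x_1}v^k+a v^k=0 .
\]
Since $a\ge\alpha_0$ for $|x_1|>R$, the mass of $v^k$ outside $Q_R$ is $O(1/k)$ in $L^2_TL^2$. Hence $\|v^k\|_{L^2_TL^2}$ is bounded. Applying \eqref{eq1al} on $[t,T]$ and averaging in $t$ then gives a bound on $\|v^k(0)\|_{L^2}$, and therefore $\|v^k\|_{L^\infty_TL^2}\lesssim 1$.

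Next we need compactness. The smoothing effect of Lemma \ref{lemma1aux}, in its scaled form (the constant depends on $\lambda_k\|v^k_0\|_{L^2}\le L$ and the $L^2$ bound), gives $\nabla v^k$ bounded in $L^2((0,T)\times Q_{r})$ for every $r$. The equation bounds $\partial_t v^k$ in $L^1_TH^{-2}_{loc}$, so the Aubin--Lions lemma yields $v^k\to v$ strongly in $L^2((0,T)\times K)$ for each bounded $K$.

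The main obstacle is the lack of compactness in the transverse directions $y\in\mathbb{R}^{n-1}$: $Q_R$ is unbounded in $y$, so mass could escape to $|y|\to\infty$ and we could get $v=0$ while $\int\!\!\int_{Q_R}(v^k)^2=1$. My first attempt is a Kato-type weighted estimate in $y$. Multiply the equation by $v^k\phi(y)$ with $\phi$ increasing to $1$ outside a large ball, or use translations in $y$, which commute with the equation since $a$ may be assumed... but $a$ depends on $y$. The alternative I would actually pursue is translation invariance in $y$ combined with a concentration-compactness argument. Choose $y_k$ so that a unit cube in $y$ carries a fixed fraction of the local mass, translate, and work with $a(\cdot+y_k)$. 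This family has a weak-$*$ limit $\tilde a$ which still satisfies $\tilde a\ge\alpha_0$ for $|x_1|>R$, and the argument below goes through with $\tilde a$. The difficulty is guaranteeing that the mass does not spread thinly; I would use the $H^1$-type local bound to control this. This step needs care.

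Given a nontrivial strong limit $v$, passing to the limit gives $\int\!\!\int a v^2=0$, so $v=0$ for $|x_1|>R$. For $n=3$ one also needs transverse localization to reach the box of Theorem \ref{Tucp}, which again must come from the compactness step. The limit $v$ solves ZK with coefficient $\lambda$ and no damping on the support region. If $\lambda>0$, rescale to $\lambda v$, which solves \eqref{eqZK1}. If $\lambda=0$, $v$ solves the linear equation, and a Fourier argument shows that compact $x_1$-support forces $v\equiv 0$. After regularizing (the appendix regularity, or mollifying in $t$), Theorem \ref{Tucp} gives $v\equiv0$, contradicting the nontriviality obtained from compactness. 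This proves \eqref{observability}.
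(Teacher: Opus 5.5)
\textbf{Verdict: there is a genuine gap.} Your scheme (contradiction, normalization $v^k=u^k/\lambda_k$, energy bound on $v^k(0)$, Kato smoothing, Aubin--Lions, passage to the limit, unique continuation) is the same as the paper's. You also locate the real difficulty correctly. But you leave it open, in two places.

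\textbf{Compactness in $y$.} Strong convergence on bounded sets together with $\int_0^T\int_{Q_R}(v^k)^2=1$ does not give a nontrivial limit, because $Q_R=[-R,R]\times\mathbb{R}^{n-1}$ is unbounded in $y$. Your remedy is to translate in $y$ and use the local $H^1$ bound to stop thin spreading. That bound alone cannot do this. Functions like $\epsilon^{(n-1)/2}f(x_1)g(\epsilon y)$ with $\epsilon\to0$ have fixed mass in $Q_R$ and bounded $H^1(\mathbb{R}^n)$ norm. Yet their mass in every unit cube tends to $0$, so every $y$-translate tends to $0$ locally. Excluding this vanishing scenario needs genuinely dynamical information, which you do not supply. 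One possibility is to show the nonlinearity becomes negligible in that regime and then prove a linear observability estimate uniform in the transverse frequency.

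\textbf{Unique continuation.} Suppose you do obtain a nontrivial translated limit $v$. You only know $v=0$ for $|x_1|>R$, so $\supp v(t)$ lies in an unbounded slab. This is true for $n=2$ as well as $n=3$, not only for $n=3$ as you write. Theorem \ref{Tucp} requires support in a box $[-B,B]^n$, so it does not apply in either dimension. You would need either transverse localization of $v$ or a unique continuation result for ZK solutions supported in a slab. Separately, your Paley--Wiener argument for $\lambda=0$ uses only the $x_1$-support, so it stands. It also covers a case the paper does not discuss.

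\textbf{Comparison with the paper.} The paper follows your route and disposes of both points by assertion. It states that $H^1(Q_r)$ is compactly embedded in $L^2(Q_r)$, and that the limit $v$ is compactly supported. Since $Q_r=[-r,r]\times\mathbb{R}^{n-1}$ is unbounded, neither assertion holds as written; the embedding already fails for $y$-translates of a fixed bump. So your suspicion is well founded. However, your proposal closes this step no more than the paper does, and without it the contradiction argument is incomplete.
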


\begin{proof}
Arguing by contradiction,	assume that \eqref{observability} is false. Then, for each $k \in \mathbb{N}$ there exists a sequence $\{u_k\}$ of mild solutions of \eqref{eqZK1dampped} corresponding to initial data $u_{k,0} \in H^1(\mathbb{R}^n)$ satisfying $\|u_{k,0}\|_{L^2(\mathbb{R}^n)} \leq L$ such that
	\begin{equation}\label{proofobserveq1}
		\int_0^T \int_{Q_R} u_k^2(x,t)\,dx\,dt \geq k \int_0^T \int_{\mathbb{R}^n} a(x)u_k^2(x,t)\,dx\,dt.
	\end{equation}
	Consequently,
	\begin{equation}\label{proofobserveq2}
		\lim_{k\to +\infty}
		\dfrac{\int_0^T \int_{\mathbb{R}^n} a(x)u_k^2(x,t)\,dx\,dt}
		{\int_0^T \int_{Q_R} u_k^2(x,t)\,dx\,dt}
		=0.
	\end{equation}
	
	We define
	\begin{equation}\label{proofobserveq3}
		\gamma_k := \|u_k\|_{L^2([0,T];L^2(Q_R))}, 
		\qquad 
		v_k := \frac{u_k}{\gamma_k},
	\end{equation}
	so that
	\begin{equation}\label{proofobserveq4}
		\|v_k\|_{L^2([0,T];L^2(Q_R))} = 1,
		\qquad \forall\, k \in \mathbb{N}.
	\end{equation}
	
	Notice that each $v_k$ is a mild solution of
	\begin{equation}\label{proofobserveq5}
		\left\{
		\begin{array}{l}
			\partial_t v_k + \partial_{x_1}\Delta v_k + \gamma_k v_k \partial_{x_1}v_k + a(x)v_k = 0,\\
			v_k(0,x) = v_{k,0},
		\end{array}
		\right.
	\end{equation}
	where
	\[
	v_{k,0} = \frac{u_{k,0}}{\gamma_k}.
	\]
	
	We intend to prove that a subsequence of $v_k$ converges to a function $v$ which is a solution of the Zakharov--Kuznetsov equation. To this end we first obtain suitable bounds for the sequence $\{v_k\}_{k\in\mathbb{N}}$.
	
	Note that Lemma \ref{lemma1aux} still holds for solutions of \eqref{proofobserveq5}, with constants depending on $\gamma_k$ and $\|v_{k,0}\|_{L^2}$. More precisely, 
	\begin{equation}\label{proofobserveq13}
		\int_0^T \int_{[-r,r]\times\mathbb{R}^{n-1}} |\nabla v_k(x,t)|^2\,dx\,dt
		\leq c(r,T,\gamma_k,\|v_{k,0}\|_{L^2}), \ \ \forall \,r>0.
	\end{equation}
Therefore, it remains to prove that $\{\gamma_k\}$ and $\{\|v_{k,0}\|_{L^2}\}$ are bounded sequences. Indeed, since
\[
\int_0^T \int_{\mathbb{R}^n} u_k^2(x,t)\,dx\,dt \leq T \| u_{k,0}\|_{L^2(\mathbb{R}^n)}^2,
\]
we have
\begin{equation}\label{proofobserveq6}
	\gamma_k \leq T^{1/2}L, \qquad \forall\, k \in \mathbb{N}.
\end{equation}

Next we prove that $\{v_{k,0}\}$ is bounded in $L^2(\mathbb{R}^n)$. Since $a(x) \geq \alpha_0$ for all $x \notin Q_R$, we have
\begin{align}\label{proofobserveq9}
	\int_0^T \int_{\mathbb{R}^n} u_k^2(x,t)\,dx\,dt
	&\leq \int_0^T \int_{Q_R} u_k^2(x,t)\,dx\,dt
	+ \dfrac{1}{\alpha_0}\int_0^T \int_{Q_R^c} a(x) u_k^2(x,t)\,dx\,dt \nonumber\\
	&\leq \int_0^T \int_{Q_R} u_k^2(x,t)\,dx\,dt
	+ \dfrac{1}{\alpha_0}\int_0^T \int_{\mathbb{R}^n} a(x) u_k^2(x,t)\,dx\,dt.
\end{align}

On the other hand, integrating \eqref{eq1al} over $[0,T]$ yields
\begin{equation}\label{proofobserveq10}
	\int_0^T \int_{\mathbb{R}^n} u_k^2(x,t)\,dx\,dt
	+ 2T \int_0^T \int_{\mathbb{R}^n} a(x)u_k^2(x,\tau)\,dx\,d\tau
	\geq
	T \int_{\mathbb{R}^n} u_{k,0}(x)^2\,dx.
\end{equation}

Thus, from \eqref{proofobserveq9} and \eqref{proofobserveq10} it follows that
\begin{align}\label{proofobserveq11}
	\int_{\mathbb{R}^n} u_{k,0}(x)^2\,dx
	&\leq \dfrac{1}{T} \int_0^T \int_{\mathbb{R}^n} u_k^2(x,t)\,dx\,dt
	+ 2 \int_0^T \int_{\mathbb{R}^n} a(x)u_k^2(x,t)\,dx\,dt \nonumber\\
	&\leq \dfrac{1}{T} \int_0^T \int_{Q_R} u_k^2(x,t)\,dx\,dt
	+ \left(2 + \dfrac{1}{\alpha_0 T}\right)
	\int_0^T \int_{\mathbb{R}^n} a(x) u_k^2(x,t)\,dx\,dt \nonumber\\
	&\leq \dfrac{\gamma_k^2}{T}
	+ \gamma_k^2 \left(2 + \dfrac{1}{\alpha_0 T}\right)
	\int_0^T \int_{\mathbb{R}^n} a(x) v_k^2(x,t)\,dx\,dt.
\end{align}

Dividing both sides of \eqref{proofobserveq11} by $\gamma_k^2$ we obtain
\begin{equation}\label{proofobserveq12}
	\int_{\mathbb{R}^n} v_{k,0}(x)^2\,dx
	\leq
	\dfrac{1}{T}
	+
	\left(2 + \dfrac{1}{\alpha_0 T}\right)
	\int_0^T \int_{\mathbb{R}^n} a(x) v_k^2(x,t)\,dx\,dt.
\end{equation}

Finally, note that from \eqref{proofobserveq2} we have
\begin{equation}\label{proofobserveq8}
	\lim_{k \to +\infty}
	\int_0^T \int_{\mathbb{R}^n} a(x) v_k^2(x,t)\,dx\,dt = 0.
\end{equation}

Combining \eqref{proofobserveq8} and \eqref{proofobserveq12}, we conclude that
\begin{equation}\label{proofobserveq14}
	\sup_{k \in \mathbb{N}} \| v_{k,0} \|_{L^2(\mathbb{R}^n)} < \infty.
\end{equation}
Gathering \eqref{proofobserveq13}, \eqref{proofobserveq6}, and \eqref{proofobserveq14}, we obtain that
\begin{equation}\label{proofobserveq15}
	\{v_k\}_{k \in \mathbb{N}} \ \text{is bounded in}\ 
	L^{\infty}([0,T];L^2(\mathbb{R}^n)) \cap L^2([0,T];H^1(Q_r)), 
	\quad \forall\, r>0.
\end{equation}

Therefore, up to a subsequence, we have that there exist $\gamma \in \mathbb{R}$, $v_0 \in L^2(Q_r)$, and 
$v \in L^{\infty}([0,T];L^2(\mathbb{R}^n))\, \cap L^2([0,T];H^1(Q_r))$ such that 
\begin{align}\label{proofobserveq23}
	\gamma_k &\longrightarrow \gamma, \nonumber\\
	v_{k,0} &\rightharpoonup v_0 \quad \text{in } L^2(Q_r),\\
	v_k &\rightharpoonup v \quad \text{in } L^2([0,T];H^1(Q_r)), \nonumber\\
	v_k &\stackrel{\ast}{\rightharpoonup} v 
	\quad \text{in } L^{\infty}([0,T];L^2(Q_r)). \nonumber
\end{align}

Next we prove
\begin{equation}\label{proofobserveq21}
	v_k \longrightarrow v \quad \text{in } L^2([0,T]\times\mathbb{R}^n).
\end{equation}

To this end, we use integration by parts and the Gagliardo--Nirenberg inequality
\[
\| v_k \|_{L^4(\mathbb{R}^n)} 
\leq 
c \| v_k \|_{L^2(\mathbb{R}^n)}^{\frac{n}{4}}
\|\nabla v_k \|_{L^2(\mathbb{R}^n)}^{\frac{4-n}{4}},
\qquad (2 \leq n \leq 4),
\]
to conclude that for every smooth compactly supported function $\varphi$ in $Q_r \times (0,T)$, 
\begin{align*}
	\left| \int_0^T\int_{Q_r} (v_k \partial_{x_1}v_k)(x,t)\varphi(x,t)\,dx\,dt \right|
	&= \left| \int_0^T\int_{Q_r} v_k^2(x,t)\partial_{x_1}\varphi(x,t)\,dx\,dt \right|\\
	\leq& \int_0^T
	\|v_k(\cdot,t)\|_{L^4(Q_r)}^2
	\|\varphi(\cdot,t)\|_{H^1(Q_r)}\,dt\\
	\leq & \int_0^T
	\|v_k(\cdot,t)\|_{L^2(\mathbb{R}^n)}^{\frac{n}{2}}
	\|v_k(\cdot,t)\|_{H^1(Q_r)}^{\frac{4-n}{2}}
	\|\varphi(\cdot,t)\|_{H^1(Q_r)}\,dt\\
	\leq &
	\|v_k\|_{L^\infty(0,T;L^2(\mathbb{R}^n))}^{\frac{n}{2}}
	\int_0^T
	\|v_k(\cdot,t)\|_{H^1(Q_r)}^{\frac{4-n}{2}}
	\|\varphi(\cdot,t)\|_{H^1(Q_r)}\,dt\\
	\leq &
	T^{\frac{n}{4}}
	\|v_k\|_{L^\infty(0,T;L^2(\mathbb{R}^n))}^{\frac{n}{2}}
	\|v_k\|_{L^2(0,T;H^1(Q_r))}^{\frac{4-n}{2}}
	\|\varphi\|_{L^2(0,T;H^1(Q_r))}.
\end{align*}

Therefore
\begin{equation}\label{proofobserveq20}
	\{v_k\partial_{x_1}v_k\}_{k \in \mathbb{N}}
	\ \text{is bounded in}\ 
	L^2([0,T];H^{-1}(Q_r)),
	\qquad \forall r>0.
\end{equation}
Consequently\footnote{If we only proved that the sequence $\{v_k\partial_{x_1}v_k\}_{k\in\mathbb{N}}$ is bounded in $L^2([0,T];H^{-2}(Q_r))$, we would still obtain \eqref{proofobserveq16}. This would allow one to enlarge the admissible range of dimensions $n$ slightly. However, since we are mainly concerned with the cases $n=2,3$, we do not pursue these additional technicalities.}, since $v_k$ satisfies \eqref{proofobserveq5}, we have
\begin{equation}\label{proofobserveq16}
	\{\partial_t v_k\}_{k\in\mathbb{N}}
	\ \text{is bounded in}\ 
	L^2([0,T];H^{-2}(Q_r)),
	\qquad \forall r>0.
\end{equation}

Hence $\{v_k\}$ is a bounded sequence in
\[
W_r := \left\{ u \in L^2([0,T];H^1(Q_r)) \ : \ \partial_t u \in L^2([0,T];H^{-2}(Q_r)) \right\}.
\]

Since $H^1(Q_r)$ is compactly embedded in $L^2(Q_r)$ and $L^2(Q_r)$ is continuously embedded in $H^{-2}(Q_r)$, it follows from the Aubin--Lions theorem that
\begin{equation}\label{proofobserveq17}
	v_k \longrightarrow v
	\quad \text{in } L^2([0,T];L^2(Q_r)),
	\qquad \forall r>0.
\end{equation}

On the other hand, from \eqref{proofobserveq8} and the hypothesis on the damping function $a(x)$, we have
\[
v_k \longrightarrow 0
\quad \text{in } L^2([0,T]\times Q_R^{c}).
\]

This, together with \eqref{proofobserveq17}, allows us to conclude that
\begin{equation}\label{proofobserveq18}
	v_k \longrightarrow v
	\quad \text{in } L^2([0,T]\times\mathbb{R}^n),
\end{equation}
where $v$ satisfies
\begin{equation}\label{proofobserveq19}
	v(x,t)=0,
	\qquad \text{for all } x\in Q_R^{c}.
\end{equation}

Next, we claim that $v$ is a mild solution of
\begin{equation}\label{weak solution obsproof}
	\partial_t v + \partial_{x_1}\Delta v + \gamma v \partial_{x_1} v = 0.
\end{equation}

To this end, consider an arbitrary function $\phi(x,t)$ satisfying
\[
\phi \in C([0,T];H^3(\mathbb{R}^n)) 
\cap 
C^1([0,T];L^2(\mathbb{R}^n)),
\qquad
\phi(\cdot,T)\equiv 0.
\]

Multiplying equation \eqref{proofobserveq5} by $\phi$ and integrating by parts, we obtain
\begin{equation}\label{proofobserveq22}
	\int_0^T\int_{\mathbb{R}^n}
	\left(
	v_k \partial_t\phi
	+
	v_k\partial_{x_1}\Delta\phi
	+
	\gamma_k v_k^2 \partial_{x_1}\phi
	+
	a(x)v_k\phi
	\right)
	\,dx\,dt
	+
	\int_{\mathbb{R}^n}
	v_{k,0}\phi(x,0)\,dx
	=
	0.
\end{equation}

Letting $k\to+\infty$ in \eqref{proofobserveq22} and using the convergences in \eqref{proofobserveq23}, we conclude that
\begin{equation}\label{proofobserveq24}
	\int_0^T\int_{\mathbb{R}^n} 
	\left(
	v \partial_t \phi 
	+ v\partial_{x_1}\Delta \phi 
	+ \gamma v^2 \partial_{x_1}\phi  
	+ a(x)v \phi
	\right)
	\,dx\,dt 
	+ \int_{\mathbb{R}^n} v_0 \phi(x,0)\,dx = 0. 
\end{equation}
This means that $v$ is a weak solution of \eqref{weak solution obsproof}, as claimed.

Taking into account the smoothing effect for the solutions (see Theorem \ref{smooth solutions}), we deduce that $v$ is in fact a smooth solution of
\[
\partial_t v + \partial_{x_1}\Delta v + \gamma v \partial_{x_1}v = 0,
\]
which is compactly supported. Therefore, the unique continuation principle (see Theorem \ref{Tucp}) implies that $v \equiv 0$. However, this is a contradiction, since
\[
\| v \|_{L^2([0,T];L^2(\mathbb{R}^n))}
=
\lim_{k\to+\infty}
\| v_k \|_{L^2([0,T];L^2(Q_R))}
=
1.
\]
\end{proof}
\section{Proof of the main results}

\subsection{Proof of Theorem \ref{teo1}} Consider $T>0$. By the hypothesis on $a(x)$ and from \eqref{eq1al}, we have
\begin{align}\label{E2.3}
	\int_0^T \int_{|x_1|> R} u^2(x,t)\,dx\,dt
	\le
	\dfrac{1}{2\alpha_0}\int_{\mathbb{R}^n} u_0^2(x)\,dx
	-
	\dfrac{1}{2\alpha_0}\int_{\mathbb{R}^n} u^2(x,T)\,dx .
\end{align}

Then, from \eqref{E2.3}, the observability inequality \eqref{observability}, and the identity \eqref{eq1al}, we deduce that
\begin{align}\label{E2.4}
	\int_0^T\int_{\mathbb{R}^n} u^2(x,t)\,dx\,dt
	&\le
	\dfrac{1}{2\alpha_0}\int_{\mathbb{R}^n} u_0^2(x)\,dx
	-
	\dfrac{1}{2\alpha_0}\int_{\mathbb{R}^n} u^2(x,T)\,dx
	+
	\int_0^T\int_{Q_R} u^2(x,t)\,dx\,dt
	\nonumber\\
	&\hspace{-0.9cm}\le
	\dfrac{1}{2\alpha_0}\int_{\mathbb{R}^n} u_0^2(x)\,dx
	-
	\dfrac{1}{2\alpha_0}\int_{\mathbb{R}^n} u^2(x,T)\,dx+
	c(L)\int_0^T\int_{\mathbb{R}^n} a(x)u^2(x,t)\,dx\,dt
	\nonumber\\
	&\hspace{-0.9cm}=
	\left(\frac{1}{2\alpha_0}+\frac{c(L)}{2}\right)
	\int_{\mathbb{R}^n} u_0^2(x)\,dx
	-
	\left(\frac{1}{2\alpha_0}+\frac{c(L)}{2}\right)
	\int_{\mathbb{R}^n} u^2(x,T)\,dx .
\end{align}

Now, since
\[
T\int_{\mathbb{R}^n} u^2(x,T)\,dx
\le
\int_0^T\int_{\mathbb{R}^n} u^2(x,t)\,dx\,dt,
\]
it follows from \eqref{E2.4} that
\begin{align*}
	\left(T+\frac{1}{2\alpha_0}+\frac{c(L)}{2}\right)
	\int_{\mathbb{R}^n} u^2(x,T)\,dx
	\le
	\left(\frac{1}{2\alpha_0}+\frac{c(L)}{2}\right)
	\int_{\mathbb{R}^n} u_0^2(x)\,dx .
\end{align*}

Therefore,
\begin{align}
	\int_{\mathbb{R}^n} u^2(x,T)\,dx
	\le
	C(L,T)\int_{\mathbb{R}^n} u_0^2(x)\,dx,
\end{align}
where
\[
0<C(L,T):=
\frac{\frac{1}{2\alpha_0}+\frac{c(L)}{2}}
{T+\frac{1}{2\alpha_0}+\frac{c(L)}{2}}
<1 .
\]

Thus, in order to conclude the result, it suffices to apply the semigroup property.

\subsection{Proof of Theorem \ref{teo2}}

From identity \eqref{eq2al1} and the condition \eqref{dc2}, we see that
\[
H(t) := \int_{\mathbb{R}^n} \left(|\nabla u(x,t)|^2 - \frac{u^3(x,t)}{3}\right) dx
\]
satisfies
\begin{align}\label{eq1theo2.1}
	\hspace{-0.3cm}\frac{d}{dt}H(t) + 2\alpha_0 \int_{\mathbb{R}^n} |\nabla u|^2 dx
	&\le
	2\int_{\mathbb{R}^n} u \nabla a(x)\cdot\nabla u\, dx
	+ \int_{\mathbb{R}^n} a(x)u^3\,dx \nonumber\\
	&\le
	2\| \nabla a\|_{L^{\infty}(\mathbb{R}^n)}
	\|u\|_{L^2(\mathbb{R}^n)}
	\|\nabla u\|_{L^2(\mathbb{R}^n)}
	+
	\|a\|_{L^{\infty}(\mathbb{R}^n)}
	\|u\|_{L^3(\mathbb{R}^n)}^3 .
\end{align}

Using Young's inequality with parameter $\varepsilon$, we obtain
\begin{equation}\label{eq2theo2.1}
	2\| \nabla a\|_{L^{\infty}(\mathbb{R}^n)}
	\|u\|_{L^2(\mathbb{R}^n)}
	\|\nabla u\|_{L^2(\mathbb{R}^n)}
	\le
	\varepsilon\|\nabla u\|_{L^2(\mathbb{R}^n)}^2
	+
	\dfrac{1}{\varepsilon}
	\| \nabla a\|_{L^{\infty}(\mathbb{R}^n)}^2
	\|u\|_{L^2(\mathbb{R}^n)}^2 .
\end{equation}

Thus, from \eqref{eq1theo2.1} and \eqref{eq2theo2.1} we deduce
\begin{align}\label{eq4theo2.1}
	\frac{d}{dt}H(t)
	+
	(2\alpha_0-\varepsilon)
	\int_{\mathbb{R}^n} |\nabla u|^2 dx
	\le
	c(\varepsilon,\|a\|_{W^{1,\infty}})
	\left(
	\|u\|_{L^2(\mathbb{R}^n)}^2
	+
	\|u\|_{L^3(\mathbb{R}^n)}^3
	\right).
\end{align}

Hence,
\begin{align}\label{eq6theo2.1}
	\frac{d}{dt}H(t)
	+
	(2\alpha_0-\varepsilon)
	\int_{\mathbb{R}^n}
	\left(
	|\nabla u|^2
	-
	\frac{b}{3}u^3
	\right) dx
	&\le
	c(\varepsilon,\|a\|_{W^{1,\infty}})
	\|u\|_{L^2(\mathbb{R}^n)}^2 \nonumber\\
	&\quad +
	\left(
	\|a\|_{L^\infty(\mathbb{R}^n)}
	+
	\frac{(2\alpha_0-\varepsilon)b}{3}
	\right)
	\|u\|_{L^3(\mathbb{R}^n)}^3 ,
\end{align}
where $b\in\mathbb{R}$ will be chosen conveniently.

To control the $L^3$ norm we use the Gagliardo--Nirenberg inequality and Young's inequality as follows:
\begin{align}\label{eq5theo2.1}
	\|u\|_{L^3(\mathbb{R}^n)}^3
	&\lesssim
	\|\nabla u\|_{L^2(\mathbb{R}^n)}^{\frac{n}{2}}
	\|u\|_{L^2(\mathbb{R}^n)}^{\frac{6-n}{2}}
	\nonumber\\
	&\le
	\varepsilon
	\|\nabla u\|_{L^2(\mathbb{R}^n)}^2
	+
	c(\varepsilon)
	\|u\|_{L^2(\mathbb{R}^n)}^{\frac{2(6-n)}{4-n}},
	\qquad (n=2,3)
	\nonumber\\
	&\le
	\varepsilon
	\|\nabla u\|_{L^2(\mathbb{R}^n)}^2
	+
	c(\varepsilon,\|u_0\|_{L^2(\mathbb{R}^n)})
	\|u\|_{L^2(\mathbb{R}^n)}^2,
	\qquad \forall\, \varepsilon\ll1 .
\end{align}
Replacing \eqref{eq5theo2.1} into \eqref{eq6theo2.1}, we obtain
\begin{align}\label{eq7theo2.1}
	\frac{d}{dt}H(t)
	&+
	\left[
	2\alpha_0-\varepsilon
	-
	\varepsilon
	\left(
	\|a\|_{L^\infty(\mathbb{R}^n)}
	+
	\frac{2\alpha_0-\varepsilon}{3}b
	\right)
	\right]
	\int_{\mathbb{R}^n}|\nabla u|^2\,dx
	\nonumber\\
	&\quad
	-
	(2\alpha_0-\varepsilon)b
	\int_{\mathbb{R}^n}\frac{u^3}{3}\,dx
	\nonumber\\
	&\le
	c(b,\alpha_0,\varepsilon,\|a\|_{W^{1,\infty}},\|u_0\|_{L^2})
	\|u\|_{L^2(\mathbb{R}^n)}^2 .
\end{align}

We choose $b$ such that
\[
2\alpha_0-\varepsilon
-
\varepsilon
\left(
\|a\|_{L^\infty(\mathbb{R}^n)}
+
\frac{2\alpha_0-\varepsilon}{3}b
\right)
=
(2\alpha_0-\varepsilon)b ,
\]
namely
\begin{equation}\label{eq11theo2.1}
	b
	=
	3\cdot
	\frac{2\alpha_0-\varepsilon(1+\|a\|_{L^\infty})}
	{4(2\alpha_0-\varepsilon)} .
\end{equation}

Therefore, from \eqref{eq6theo2.1} and \eqref{eq5theo2.1} we obtain
\begin{equation}\label{eq12theo2.1}
	\frac{d}{dt}H(t)
	+
	(2\alpha_0-\varepsilon)b\,H(t)
	\le
	c(\varepsilon,\|u_0\|_{L^2},\|a\|_{W^{1,\infty}})
	\|u\|_{L^2(\mathbb{R}^n)}^2 .
\end{equation}

Since $\|u(t)\|_{L^2(\mathbb{R}^n)} \le C e^{-2\alpha_0 t}$, we conclude from \eqref{eq12theo2.1} that
\begin{equation}\label{eq10theo2.1}
	H(t)
	\le
	\tilde c(\varepsilon,\|u_0\|_{L^2},\|a\|_{W^{1,\infty}})
	e^{-(2\alpha_0-\varepsilon)bt}.
\end{equation}

Finally, using the Gagliardo--Nirenberg inequality as in \eqref{eq5theo2.1}, we obtain
\[
c\,\|\nabla u(t)\|_{L^2(\mathbb{R}^n)}^2
\le
H(t),
\]
for some positive constant $c$. Hence we conclude \eqref{H1dec}.


\section{Appendix}

Here we justify the regularity of the solutions to equation \eqref{eqZK1dampped}. This property is necessary in order to apply the unique continuation principle in the proof of Lemma \ref{lemma2aux}.

\begin{theorem}[Regularity]\label{smooth solutions}
	Let $m \geq 1$, and let $u \in C([0, T]; H^1(\mathbb{R}^n))$ be a mild solution of \eqref{eqZK1dampped} with support
	
	\hspace{3.0cm}$\supp u \subseteq Q_r:=[-r,r]\times\R^{n-1},$ \, for some $r>0$.\\\\
	If $u(t_0) \in H^m(\mathbb{R}^n)$ for some $t_0 \in (0, T)$, then $u(t) \in H^{m+1}(\mathbb{R}^n)$ for all $t \in [t_0, T]$.
\end{theorem}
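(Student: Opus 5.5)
We will prove the statement by induction on $m$, gaining one derivative at each step through a Kato-type weighted energy estimate for the derivatives of order $m$. The key structural input is the support hypothesis. Since $\supp u(t)\subseteq Q_r$, a weight $\rho(x_1)$ that is increasing in $x_1$ and satisfies $\rho'\equiv 1$ on $[-r-1,r+1]$ sees the whole solution through $\rho'$. The local smoothing effect of Lemma~\ref{lemma1aux} is therefore a \emph{global} gain of one derivative in $L^2_TL^2(\mathbb{R}^n)$. All computations will be done formally for smooth solutions and then justified by regularizing the data, using the Lipschitz dependence in Theorem~\ref{TWP} together with weak lower semicontinuity.

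\textbf{Step 1: the energy identity at level $m$.} Fix $m\ge1$ and a multi-index $\alpha$ with $|\alpha|=m$. Apply $\partial^\alpha$ to \eqref{eqZK1dampped}, multiply by $\partial^\alpha u\,\rho(x_1)$ and integrate over $\mathbb{R}^n\times[t_0,t]$. Exactly as in \eqref{eq3.0katosmooth}, the dispersive term produces
\[
\tfrac12\int_{t_0}^t\!\!\int \big(2(\partial_{x_1}\partial^\alpha u)^2+|\nabla\partial^\alpha u|^2\big)\rho'\,dx\,d\tau ,
\]
up to a harmless term with $\rho'''$. Because of the support assumption, this quantity controls $\int_{t_0}^t\|\nabla\partial^\alpha u\|_{L^2}^2\,d\tau$ and hence $\int_{t_0}^t\|u\|_{H^{m+1}}^2$. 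The damping term splits as $\int \partial^\alpha(au)\,\partial^\alpha u\,\rho$. Its top-order part is nonnegative, and the lower-order commutator terms involve derivatives of $a$. These are bounded by $\|a\|_{W^{1,\infty}}\|u\|_{H^m}^2$ for $m=1$. For larger $m$ they also involve $\|a\|_{W^{m,\infty}}$, so we must either assume $a$ is smooth on the relevant set or absorb these terms by duality against $\|u\|_{H^{m+1}}$, at the cost of a derivative we actually have.

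\textbf{Step 2: the nonlinear term.} We must bound
\[
\int\partial^\alpha(u\partial_{x_1}u)\,\partial^\alpha u\,\rho .
\]
The highest-order piece $u\,\partial_{x_1}\partial^\alpha u\,\partial^\alpha u\,\rho$ integrates by parts to $-\tfrac12\int(\partial_{x_1}(u\rho))(\partial^\alpha u)^2$. The remaining pieces are of the form $\partial^\beta u\,\partial^{\alpha-\beta+e_1}u\,\partial^\alpha u$ with $1\le|\beta|\le m$. In dimensions $n=2,3$, Gagliardo--Nirenberg (for instance $L^4$ or $L^\infty$ interpolated between $H^m$ and $H^{m+1}$) together with Young's inequality bounds every such term by
\[
\varepsilon\|u\|_{H^{m+1}}^2+C(\varepsilon,\|u\|_{L^\infty_TH^{1}})\,P\big(\|u\|_{H^m}\big)
\]
for some polynomial $P$. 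The $\varepsilon$ term is absorbed into the smoothing term from Step 1.

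\textbf{Step 3: closing the induction.} Steps 1 and 2 give a Gronwall-type inequality for $\|u(t)\|_{H^m}^2$, so $u\in L^\infty([t_0,T];H^m)$. They also give $u\in L^2([t_0,T];H^{m+1})$, which yields $u(t_1)\in H^{m+1}$ for almost every $t_1$ arbitrarily close to $t_0$. Repeating the argument at level $m+1$ from $t_1$ then gives $u\in L^\infty([t_1,T];H^{m+1})$. Since this holds from almost every $t_1>t_0$, and the estimates at level $m+1$ are uniform, we pass to $t_0$ using continuity in time in the weaker norm and weak compactness. This yields $u(t)\in H^{m+1}$ for all $t\in[t_0,T]$.

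\textbf{Main obstacle.} The hardest step is Step 2 at the base level $m=1$. There $\|u\|_{L^\infty_x}$ is not available from $H^1$ when $n=3$. We will handle it by first controlling $\|\partial u\|_{L^4}^2\lesssim\|u\|_{H^1}^{(4-n)/2}\|u\|_{H^2}^{n/2}$ and then applying Young's inequality, which works precisely because $n/2<2$. A secondary difficulty is the rigorous justification of the formal computation for mild solutions, which requires an approximation argument that preserves the support condition only approximately. We will handle that by using a weight $\rho'$ that equals $1$ on a slightly larger strip.
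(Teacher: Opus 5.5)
\textbf{Steps 1--2 and the interior times.} Your Steps 1--2 are the same weighted Kato estimate the paper uses. The paper writes out only the first-derivative level: it multiplies the differentiated equation by $2\rho v_j$ and uses the support to turn the $\rho'$-smoothing into a global gain. Your Step 3 also correctly gives $u(t)\in H^{m+1}$ for every $t\in(t_0,T]$.

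\textbf{The gap: the endpoint $t=t_0$.} The statement includes $t=t_0$, and your argument does not reach it. The level-$(m+1)$ bound you obtain starting from $t_1$ is controlled by $\|u(t_1)\|_{H^{m+1}}$. Knowing $u\in L^2([t_0,T];H^{m+1})$ gives no sequence $t_1\downarrow t_0$ along which this quantity stays bounded; a rate like $(t_1-t_0)^{-1/2+\epsilon}$ is fully compatible with it. So the claim that ``the estimates at level $m+1$ are uniform'' is false, and weak compactness plus continuity in the weaker norm give nothing at $t_0$. Estimates launched forward from $t_0$ cannot improve the regularity of $u(t_0)$ itself; that gain has to come from earlier times.

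\textbf{How to close it.} The missing idea is to use $t_0>0$, together with the fact that $u\in C([0,T];H^1)$ and $\supp u\subseteq Q_r$ hold on all of $[0,T]$. Run the smoothing estimate on $[s,T]$ for some $s<t_0$ at which $u(s)$ already has the needed regularity. For $m=1$, take $s=0$. For larger $m$, first run the induction from $t=0$, which gives $u(s)\in H^m$ for every $s\in(0,T]$. This produces $t_*\in(s,t_0)$ with $u(t_*)\in H^{m+1}$. Then propagate $H^{m+1}$ forward from $t_*$, either by your level-$(m+1)$ estimate or, as the paper does, by the well-posedness theory at that level plus uniqueness. This covers $[t_0,T]$ and is exactly how the paper closes the argument.

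\textbf{Two smaller points, which the paper also passes over.}
\begin{enumerate}
\item With $a\in W^{1,\infty}$ only, your duality trick moves $|\beta|-1$ derivatives from $\partial^\beta a$ onto $\partial^\alpha u\,\rho$. The worst resulting term has order $m+|\beta|-1$, which stays within $H^{m+1}$ only when $|\beta|\le 2$. So the trick covers $m\le 2$, but for $m\ge 3$ you genuinely need more regularity of $a$. This is harmless for the paper's application, where the limit satisfies $av\equiv 0$.
\item The regularized solutions $u_k$ need not stay supported in any strip, since the flow does not preserve support in $x_1$. Enlarging the set where $\rho'\equiv 1$ therefore does not restore the global gain for them. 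A uniform bound for the approximations has to avoid the support hypothesis, for instance by controlling the cubic terms against the $\rho'$-weighted gradient via Lemma~\ref{VRnL2.3}, as the paper's footnote suggests.
\end{enumerate}
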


\begin{proof}
	\textbf{Case} $m=2$. We consider $t_0=0$. First assume that $u$ is a smooth solution, so that it satisfies the differential equation. Let us denote $v_j=\partial_{x_j}u$. Differentiating \eqref{eqZK1dampped} with respect to the $j$-th coordinate, we obtain
	\begin{equation}\label{eq1smoothappendix}
		\partial_t v_j + \partial_{x_1}\Delta v_j + v_j v_1 + u\partial_{x_1}v_j + \partial_{x_j}a(x)u + a(x)v_j = 0.
	\end{equation}
	
	Let $\rho(x_1)$ be as in the proof of Lemma \ref{lemma1aux}. Multiplying equation \eqref{eq1smoothappendix} by $2\rho(x_1)v_j$ and integrating by parts, we obtain
	\begin{align*}
		\frac{d}{dt}\int_{\mathbb{R}^n} &v_j(x,t)^2 \rho(x_1)\,dx
		+ \int_{\mathbb{R}^n} [3(\partial_{x_1}v_j(x,t))^2 + |\nabla_y v_j(x,t)|^2]\rho'(x_1)\,dx \\
		=& \int_{\mathbb{R}^n} v_j(x,t)^2 \rho'''(x_1)\,dx
		- \int_{\mathbb{R}^n} v_j(x,t)^2 v_1(x,t)\rho(x_1)\,dx
		+ \int_{\mathbb{R}^n} v_j(x,t)^2 u(x,t)\rho'(x_1)\,dx \\
		&-2\int_{\mathbb{R}^n} \partial_{x_j}a(x)u(x,t)v_j(x,t)\rho(x_1)\,dx
		-2\int_{\mathbb{R}^n} a(x)v_j(x,t)^2\rho(x_1)\,dx .
	\end{align*}
	
	Integrating the above equation with respect to time, we obtain
	\begin{align}\label{eq2smoothappendix}
		&\int_0^t\int_{\mathbb{R}^n} |\nabla v_j(x,\tau)|^2 \rho'(x_1)\,dx\,d\tau \nonumber\\
		&\leq \int_{\mathbb{R}^n} v_j(x,0)^2\rho(x_1)\,dx
		+ \int_0^t\int_{\mathbb{R}^n} v_j(x,\tau)^2\rho'''(x_1)\,dx\,d\tau
		- \int_0^t\int_{\mathbb{R}^n} v_j(x,\tau)^2 v_1(x,\tau)\rho(x_1)\,dx\,d\tau \nonumber\\
		&\quad + \int_0^t\int_{\mathbb{R}^n} v_j(x,\tau)^2 u(x,\tau)\rho'(x_1)\,dx\,d\tau
		-2\int_0^t\int_{\mathbb{R}^n} \partial_{x_j}a(x)u(x,\tau)v_j(x,\tau)\rho(x_1)\,dx\,d\tau \nonumber\\
		&:= \int_{\mathbb{R}^n} v_j(x,0)^2\rho(x_1)\,dx + I + II + III + IV .
	\end{align}
	
	Since $\rho'''$ is bounded, we have
	\begin{equation}\label{eq3smoothappendix}
		|I|
		\le
		\|\rho'''\|_{L^\infty}
		\int_0^t\int_{\mathbb{R}^n} v_j(x,\tau)^2\,dx\,d\tau
		\le
		\|\rho'''\|_{L^\infty}
		\|u\|_{L^2([0,T];H^1(\mathbb{R}^n))}^2 .
	\end{equation}
Using H\"older's inequality, Young's inequality, Lemma \ref{VRnL2.3} (or simply the Gagliardo--Nirenberg inequality)\footnote{The use of Lemma \ref{VRnL2.3} would be essential in this step if the gradient were multiplied by $\rho'(x_1)$ instead of $\rho(x_1)$. In that situation we would not need the assumption on the support of $u$. Under the support condition, the standard Gagliardo--Nirenberg inequality also applies.}, estimate \eqref{D3}, and the support hypothesis on $u$, we obtain
\begin{align}\label{eq4smoothappendix}
	|II| \leq&
	\left(
	\int_0^T\int_{\mathbb{R}^n} |v_j(x,\tau)|^3 \rho(x_1)\,dx\,d\tau
	\right)^{\frac{2}{3}}
	\left(
	\int_0^T\int_{\mathbb{R}^n}|v_1(x,\tau)|^3 \rho(x_1)\,dx\,d\tau
	\right)^{\frac{1}{3}}
	\nonumber\\
	\leq&
	\dfrac{2}{3}\int_0^T\int_{\mathbb{R}^n}|v_j(x,\tau)|^3 \rho(x_1)\,dx\,d\tau
	+
	\dfrac{1}{3}\int_0^T\int_{\mathbb{R}^n}|v_1(x,\tau)|^3 \rho(x_1)\,dx\,d\tau
	\nonumber\\
	\leq&
	\varepsilon
	\left\{
	\int_0^T\int_{\mathbb{R}^n} |\nabla v_j(x,\tau)|^2 \rho(x_1)\,dx\,d\tau
	+
	\int_0^T\int_{\mathbb{R}^n} |\nabla v_1(x,\tau)|^2 \rho(x_1)\,dx\,d\tau
	\right\}
	\nonumber\\
	&+
	c(\varepsilon,r,\|\nabla u_0\|_{L^2},\|\rho\|_{L^\infty})
	\nonumber\\
	\leq&
	\varepsilon \|\rho\|_{L^\infty}
	\left\{
	\int_0^T\int_{Q_r} |\nabla v_j(x,\tau)|^2\,dx\,d\tau
	+
	\int_0^T\int_{Q_r} |\nabla v_1(x,\tau)|^2\,dx\,d\tau
	\right\}
	\nonumber\\
	&+
	c(\varepsilon,r,\|u_0\|_{H^1},\|\rho\|_{L^\infty})
	\nonumber\\
	=&
	\varepsilon \|\rho\|_{L^\infty}
	\left[
	\|\nabla v_j\|_{L^2([0,T];Q_r)}^2
	+
	\|\nabla v_1\|_{L^2([0,T];Q_r)}^2
	\right]
	+
	c(\varepsilon,r,\|u_0\|_{H^1},\|\rho\|_{L^\infty}).
\end{align}

By the same argument used in \eqref{eq4smoothappendix}, we obtain the estimate
\begin{align}\label{eq5smoothappendix}
	|III|
	&\le
	\left(
	\int_0^T\int_{\mathbb{R}^n}|v_j(x,\tau)|^3 \rho'(x_1)\,dx\,d\tau
	\right)^{\frac{2}{3}}
	\left(
	\int_0^T\int_{\mathbb{R}^n}|u(x,\tau)|^3 \rho'(x_1)\,dx\,d\tau
	\right)^{\frac{1}{3}}
	\nonumber\\
	&\le
	\varepsilon \|\rho\|_{L^\infty}
	\left\{
	\int_0^T\int_{Q_r} |\nabla v_j(x,\tau)|^2\,dx\,d\tau
	+
	\int_0^T\int_{Q_r} |\nabla u(x,\tau)|^2\,dx\,d\tau
	\right\}
	\nonumber\\
	&+
	c(\varepsilon,r,\|u_0\|_{H^1},\|\rho\|_{L^\infty})
	\nonumber\\
	&\le
	\varepsilon \|\rho\|_{L^\infty}
	\left[
	\|\nabla v_j\|_{L^2([0,T];Q_r)}^2
	+
	\|u\|_{L^2([0,T];H^1(\mathbb{R}^n))}^2
	\right]
	+
	c(\varepsilon,r,\|u_0\|_{H^1},\|\rho\|_{L^\infty}).
\end{align}

Similarly, we obtain
\begin{align}\label{eq6smoothappendix}
	|IV|
	\leq&
	\|a\|_{W^{1,\infty}}
	\left(
	\int_0^T\int_{\mathbb{R}^n} v_j(x,\tau)^2 \rho(x_1)\,dx\,d\tau
	\right)^{\frac12}
	\left(
	\int_0^T\int_{\mathbb{R}^n} u(x,\tau)^2 \rho(x_1)\,dx\,d\tau
	\right)^{\frac12}
	\nonumber\\
	\leq&
	\varepsilon \|\rho\|_{L^\infty}\|a\|_{W^{1,\infty}}
	\|\nabla v_j\|_{L^2([0,T];Q_r)}^2
	+
	c(\varepsilon,r,\|u_0\|_{H^1},\|\rho\|_{L^\infty},\|a\|_{W^{1,\infty}}).
\end{align}

Combining the estimates \eqref{eq2smoothappendix}, \eqref{eq3smoothappendix}, \eqref{eq4smoothappendix}, \eqref{eq5smoothappendix}, and \eqref{eq6smoothappendix}, and choosing $0<\varepsilon\ll1$, we obtain
\begin{equation}\label{eq7smoothappendix}
	\|\nabla v_j\|_{L^2([0,T]\times Q_r)}
	\leq
	c_1 \|u\|_{L^2([0,T];H^1(\mathbb{R}^n))}^2 + c_2,
\end{equation}
where $c_1$ and $c_2$ are positive constants such that
\begin{equation}\label{eq8smoothappendix}
	c_1=c_1(\|\rho\|_{W^{3,\infty}},\|a\|_{W^{1,\infty}})
	\quad\text{and}\quad
	c_2=c_2(\varepsilon,r,\|u_0\|_{H^1},\|\rho\|_{W^{3,\infty}},\|a\|_{W^{1,\infty}}).
\end{equation}

Now we consider the case where $u$ is only assumed to belong to $C([0,T];H^1(\mathbb{R}^n))$. Let us argue that \eqref{eq7smoothappendix} still holds in this case. The argument is standard. 

Consider a sequence $(u_{k,0})\subset H^{\infty}(\mathbb{R}^n)$ approximating $u_0$ in the $H^1(\mathbb{R}^n)$ topology. The well-posedness theory stated above (Theorem \ref{TWP}) provides a sequence of corresponding solutions $(u_k)\subset C([0,T];H^{\infty}(\mathbb{R}^n))$. From \eqref{eq7smoothappendix} we conclude that $(u_k)$ is bounded in $L^2([0,T];H^2(\mathbb{R}^n))$. Hence there exists a function $v\in L^2([0,T];H^2(\mathbb{R}^n))$ such that
\[
u_k \rightharpoonup v
\quad\text{in}\quad
L^2([0,T];H^2(\mathbb{R}^n)).
\]
In particular, since $u_k \rightharpoonup u$ in $L^2([0,T];H^1(\mathbb{R}^n))$, it follows from the uniqueness of the weak limit that $u=v$, and therefore
\[
u\in L^2([0,T];H^2(\mathbb{R}^n)).
\]
Moreover, since each $u_k$ satisfies \eqref{eq7smoothappendix}, that is,
\begin{equation}\label{eq9smoothappendix}
	\|\nabla\partial_j u_k\|_{L^2([0,T]\times Q_r)}
	\le
	c_1\|u_k\|_{L^2([0,T];H^1(\mathbb{R}^n))}^2+c_2,
\end{equation}
taking the $\liminf$ we conclude that the weak limit $u$ also satisfies \eqref{eq7smoothappendix}.

To finish the proof, let $t_0\in(0,T)$. Since $u\in L^2([0,T];H^2(\mathbb{R}^n))$, it follows that $u(t)\in H^2(\mathbb{R}^n)$ for almost every $t\in[0,T]$. Hence there exists $t_\ast\in(0,t_0)$ such that
\[
u(t_\ast)\in H^2(\mathbb{R}^n).
\]

Using the well-posedness theory for the IVP \eqref{eqZK1dampped} with initial data $u(t_\ast)\in H^2(\mathbb{R}^n)$, we conclude by uniqueness that the solution satisfies
\[
u\in C([t_\ast,T];H^2(\mathbb{R}^n)).
\]

In particular,
\[
u\in C([t_0,T];H^2(\mathbb{R}^n)).
\]

\medskip
\noindent{\bf Case $m\ge3$.}
The general case $m\ge3$ follows by induction, differentiating the equation repeatedly and applying the previous step. We omit the details for brevity, as the idea remains the same but the notation becomes cumbersome.
\end{proof}

\end{document}